\theoremstyle{plain}
\newtheorem{theorem}{Theorem}[section]
\newtheorem{lemma}[theorem]{Lemma}
\newtheorem{proposition}[theorem]{Proposition}
\theoremstyle{definition}
\newtheorem{example}[theorem]{Example}
\newtheorem{definition}[theorem]{Definition}
\theoremstyle{remark}
\newtheorem{remark}[theorem]{Remark}
\renewcommand{\hom}{\operatorname{Hom}}
\newcommand{\End}{\operatorname{End}}
\newcommand{\id}{\operatorname{id}}
\newcommand{\cobar}{\operatorname{\Omega}}
\newcommand{\cofrob}{\mathrm{co}\mathbf{Frob}}
\newcommand{\sS}{\mathbb{S}}
\newcommand{\fieldk}{\mathbf{k}}
\begin{document}

\title{Algebras over $\cobar(\cofrob)$}

\author{Gabriel C. Drummond-Cole}
\address{CUNY Graduate Center, City University of New York, 
    365 Fifth Avenue, 
New York, NY 10016-4309}
\email{gdrummond-cole@gc.cuny.edu}
\author{John Terilla}
\address{Queens College and CUNY Graduate Center, City University of New York, 
    Flushing, NY 11367}
\email{jterilla@qc.cuny.edu}
\author{Thomas Tradler}
\address{NYC College of Technology and CUNY Graduate Center, City University of New York, Brooklyn, NY 11201}
\email{ttradler@citytech.cuny.edu}

\begin{abstract}We show that a square zero, degree one element in
  $W(V)$, the Weyl algebra on a vector space $V$, is equivalent to
  providing $V$ with the structure of an algebra over the properad
  $\cobar(\cofrob)$, the properad arising from the cobar construction
  applied to the cofrobenius coproperad.
\end{abstract}

\maketitle 
\tableofcontents

\section{Introduction}
The main result in this paper is Theorem \ref{T:main} which asserts
that two algebraic structures defined on a vector space $V$ are the
same.  One structure is defined by a square zero, degree one element
in $W(V)$, the Weyl algebra on $V$.  In the next few paragraphs, we
give an brief summary of the Weyl algebra, and we give the precise
definitions and formulae with signs in Sections \ref{S:SV-TV-review}
and Section \ref{S:Weyl}. The other structure is an algebra over the
properad $\cobar(\cofrob)$.  Roughly speaking, properads and
coproperads are constructs that model composable and decomposable
operations to and from the tensor powers of a vector space.  The main
reference is \cite{V} and we give a review of properads and
coproperads, mostly to fix notation and conventions, in Section
\ref{S:properads}.  In the same way that operads govern algebras with
many-to-one operations, properads and coproperads govern algebras with
many-to-many operations, such as Lie bialgebras, and are built to
accomodate the ``higher genus'' phenomena which may arise from
composing multiple outputs with multiple inputs, such as the
involutive relation possessed by certain Lie bialgebras.  Of
particular interest here are the relations organized by genus arising
from the Frobenius compatibility in Frobenius algebras.  There is a
simple coproperad, call it $\cofrob$, determined by the Frobenius
relations, and we review it in Section \ref{SS:cofrob}.  The symbol
$\cobar$ in the expression ``$\cobar(\cofrob)$'' denotes a general
construction called the cobar construction, which assigns a properad
to certain coproperads.  We review the cobar construction in Section
\ref{SS:cobar}.

Now, we present an overview of how we define the Weyl algebra.  Fix a
ground field $\fieldk$ of characteristic zero.  Let $V$ be a graded
vector space over $\fieldk$, let $S^kV$ be the $k$-th symmetric
product of $V$, let $SV=\oplus_{k=0}^\infty S^kV$ be the symmetric
algebra of $V$, and let $\widehat{SV}=\prod_{k=0}^\infty S^k V$.
Consider the $k[[\hbar]]$ module $\hom(SV,\widehat{SV})[[\hbar]]$.
There exists a star product
$$\star:\hom(SV,\widehat{SV})[[\hbar]]\otimes_{k[[\hbar]]}\hom(SV,\widehat{SV})[[\hbar]]\to
\hom(SV,\widehat{SV})[[\hbar]]$$ which is an associative,
noncommutative, degree zero map of $k[[\hbar]]$ modules.  We let
$W(V):=\left(\hom(SV,\widehat{SV})[[\hbar]],\star\right)$, and call it
the Weyl algebra\footnote{We use the name ``Weyl'' since Hermann Weyl
  used a prototype of this algebra in his work in quantum mechanics
  (see Chapter 2, section 11 of \cite{W}), although the term ``star
  product'' was introduced later \cite{FLS}. The reader interested in
  the rich history of Weyl algebras and star products may wish to
  consult \cite{DS}, and the references therein.} of $V$.  We define
the star product in a coordinate free way which is also natural from
the point of view of maps between tensor powers of vector spaces, but
we pay for our choice to be choice-free with combinatorial factors
(banished to Appendix \ref{A:combinatorics}) which are used to align
our definition with the familiar coordinate-dependent presentation in
common use since at least 1928 (\cite{W}).  Lemma \ref{L:Weyl-product}
states that our definition is equivalent to the traditional one.

The star product is determined by its values on pairs $f,g\in
\hom(SV,\widehat{SV})$ and decomposes in powers of $\hbar$ by
$$f\star g = f \circ_1 g + \hbar f\circ_1 g + \hbar^2 f\circ_2
g+\cdots$$ We are interested in degree negative one elements $H\in
W(V)$ satisfying $H\star H=0$.  Any element $H\in W(V)$ in the Weyl
algebra comprises a collection of operators
$\left(\sigma_{(g)}\right)_i^j:S^iV \to S^jV$, $g,i,j\geq 0$:
decompose $H$ into pieces $H=\sigma_{(0)}+\hbar
\sigma_{(1)}+\hbar^2\sigma_{(2)}+\cdots$ where each
$\sigma_{(g)}:SV\to \widehat{SV}$ and decompose each map
$\sigma_{(g)}$ into operators $\left(\sigma_{(g)}\right)_i^j:S^iV \to
S^jV$.  The condition that $H\star H=0$ summarizes an infinite
collection of relations among the maps
$\left(\sigma_{(g)}\right)_i^j$.  In this paper, we make a technical
assumption on $H$ that $\left(\sigma_{(g)}\right)_i^j=0$ if either $i$
or $j$ or both are zero in order to avoid certain difficulties when we
compare $H$ with a properadic structure.

It is no surprise that degree one, square zero elements of the Weyl
algebra make up the data of an algebra over a properad.  The work
about which we are reporting consists mostly of identifying the
properad precisely, and working through the signs and combinatorial
factors.  A motivation for the work is that elements of square zero in
the Weyl algebra appear in a number of settings---they figure
prominently in a mathematical interpretation of quantum field theory
that grew from the BV-quantization scheme \cite{PTT}; and the deep
compactification, gluing, and analysis theorems and conjectures in
symplectic geometry can be summarized as a square zero, degree one
element $H$ in the Weyl algebra of a vector space defined by the Reeb
orbits of a contact manifold \cite{EGH}.

In the last section of the paper, Section \ref{S:homology}, we verify
that the homology of an algebra over $\cobar(\cofrob)$ is a
(commutative) Lie bialgebra satisfying the involutive relation.  We
conjecture that the $\cobar(\cofrob)$ properad gives a resolution of
the Lie bialgebra properad, but at present we do not have a proof
(computer computations show that if $\cobar(\cofrob)$ is not a
resolution of involutive biLie, one must look at rather high Euler
characteristic to find a nonzero homology class).  One implication of
Section \ref{S:homology} is that from a degree one element $H\in W(V)$
with $H\star H=0$, one obtains an associated homology theory which has
the structure of a Lie bialgebra.  In the case of the $H$ from
symplectic field theory, the involutive Lie bialgebra in homology is
known to contact geometers \cite{CL}, see also \cite{STT}.

The authors would like to thank the referee for the helpful
suggestions that sharpened the exposition.

\section{Review of symmetric and tensor
  algebras}\label{S:SV-TV-review}
In this section, we fix some basic notation that is used throughout
this paper, and define partial gluing operations $\circ_k$ which are
used in our definition of the Weyl algebra.  Some of the complexity in
the next couple of sections arises from passing between viewing the
symmetric algebra as a \emph{quotient} of the tensor algebra and
viewing the symmetric algebra as a \emph{subalgebra} of the tensor
algebra.  When we view $SV$ as the free commutative algebra on $V$, it
is naturally obtained as a quotient of the free algebra $TV$.  In
characteristic zero, $SV$ and $TV$ are also constructions of the free
cocommutative coalgebra and the free coalgebra on $V$, in which case
$SV$ naturally embeds in $TV$.  The specifics follow, but the reader
may wish to skim over the signs and combinatorics and jump to Figure
\ref{fig1} which gives a picture of the partial gluing operations
$\circ_k$ used to define the Weyl algebra in Definition \ref{D:Weyl}.

For any element $v$ in the graded vector space $V$, let $|v|$ denote
the degree of $v$.  Let $T^nV$ and $TV$ denote the corresponding
tensor power and tensor algebra of $V$.  The tensor product is denoted
by $\otimes$ and the symmetric product by $\odot$.  The element
$v_1\otimes\cdots\otimes v_k\in T^kV$ will be denoted by $\bar{v}$.

If $\sigma$ is in $\sS^k$, the symmetric group on $k$ letters, then
let $\epsilon(\sigma,\bar{v})$ be the Koszul sign, i.e. the sign of
the permutation induced by $\sigma$ on the odd entries of $\bar{v}$.
Then there is a left $\sS^k$-action on $T^kV$ defined as the linear
extension of $\sigma(v_1\otimes\cdots\otimes
v_k)=\epsilon(\sigma,\bar{v})v_{\sigma^{-1}(1)}\otimes\cdots\otimes
v_{\sigma^{-1}(k)}$.  The image of $\bar {v}$ under the action of
$\sigma$ will be denoted $\sigma \bar v$.
The sign $\epsilon(\phi, \bar{v})$ where $\phi$ is the permutation
which reverses the order of $\bar{v}$ appears occasionally; denote it
by $||\bar{v}||$.  This sign depends only on the number of odd entries
of $\bar{v}$; if this number is $0$ or $1$ mod $4$, the sign is
positive; if it is $2$ or $3$ the sign is negative.  It also satisfies
$(||\bar{u}||)(||\bar{v}||)(||\bar{u}\otimes \bar{v}||) = (-1)^{| \bar
  u| | \bar v|}$.

A $k,\ell$ shuffle $\sigma$ is an element of $\sS_k$ such that
$\sigma(1)<\cdots<\sigma(\ell)$ and $\sigma(\ell+1)<\cdots<\sigma(k)$.
Let the set of shuffles be $\sS_{k,\ell}$.  An unshuffle is an element
of $\sS_{k,\ell}^{-1}$.  If $\tau$ is an unshuffle, then
$\bar{v}_\ell$ and $\bar{v}_{k-\ell}$ should be taken to mean the
first $\ell$ and the last $k-\ell$ factors of $\tau(\bar{v})$,
respectively.  The suppression of $\tau$ should not cause much
confusion.  Any permutation in $\sS_k$ can be factored uniquely as the
composition of a $k,\ell$ shuffle with a permutation from
$\sS_\ell\otimes \sS_{k-\ell}$ and uniquely as the composition of a
permutation from $\sS_\ell\otimes \sS_{k-\ell}$ with a $k, \ell$
unshuffle.

The space $S^kV$ is defined as the quotient of $T^kV$ by the subspace
spanned by $v-\sigma v$, where $v$ ranges over $T^kV$ and $\sigma$
ranges over $\sS_k$. Let the symmetric class of an element $v$ of
$T^kV$ be denoted $[v]$; let $s^k:T^kV\to S^kV$ denote the projection
$v\mapsto [v]$.  Also, oweing to the fact that in characteristic zero,
the algebras $TV$ and $SV$ are constructions of the free coalgebra and
free commutative coalgebra on $V$, one can embed $S^kV$ in $T^kV$ via
the symmetrization map $\iota^k:S^kV\to T^kV$ defined by
$$
\iota^k [v]=\frac{1}{k!}\sum_{\sigma\in \sS^k}\sigma v.
$$
The superscript $k$ in $\iota^k$ and $s^k$ will usually be suppressed.
Observe that $s \iota [v] = \frac{1}{k!}\sum [\sigma v]=[v]$ and
$\iota s (v)=\frac{1}{k!}\sum \sigma v$.  If $v\in T^kV$ satisfies
$\sigma v=v$ for all $\sigma \in \sS_k$, then $\iota s v=v.$

Using the above notation, we now define the partial gluing maps, both
on the tensor algebra $TV$ and the symmetric algebra $SV$.
\begin{definition}
  For $i,j,k,m,n\ge 0$, there is a \emph{partial gluing map}
  $\circ_k:\hom(T^mV,T^nV)\otimes \hom(T^iV,T^jV)\to
  \hom(T^{m+i-k}V,T^{n+j-k}V)$ given by, $$\varphi\circ_k \psi =
  (\varphi\otimes \id^{\otimes j-k})\circ (\id^{\otimes m-k}\otimes
  \psi)$$ and is depicted in the following figure.

  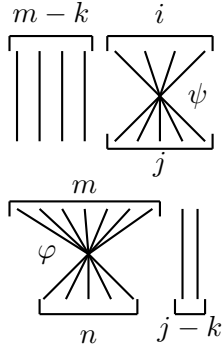
\begin{figure}[h]
    \begin{pspicture}(0,0)(5,4.8)
      \psline(1,4)(1,2.8) \psline(1.3,4)(1.3,2.8)
      \psline(1.6,4)(1.6,2.8) \psline(1.9,4)(1.9,2.8)
      \psline(2.3,4)(2.9,3.4) \psline(2.7,4)(2.9,3.4)
      \psline(3.1,4)(2.9,3.4) \psline(3.5,4)(2.9,3.4)
      \psline(2.9,3.4)(2.9,2.8) \psline(2.9,3.4)(2.6,2.8)
      \psline(2.9,3.4)(2.3,2.8) \psline(2.9,3.4)(3.2,2.8)
      \psline(2.9,3.4)(3.5,2.8) \rput(3.4,3.4){$\psi$}
      \psline(.9,4.2)(.9,4) \psline(.9,4.2)(2,4.2) \psline(2,4.2)(2,4)
      \rput(1.45,4.5){$m-k$} \psline(2.2,4.2)(3.6,4.2)
      \psline(2.2,4.2)(2.2,4) \psline(3.6,4.2)(3.6,4)
      \rput(2.9,4.5){$i$} \psline(2.2,2.7)(3.6,2.7)
      \psline(2.2,2.7)(2.2,2.9) \psline(3.6,2.7)(3.6,2.9)
      \rput(2.9,2.5){$j$} \psline(.9,2)(2.9,2) \psline(.9,2)(.9,1.8)
      \psline(2.9,2)(2.9,1.8) \rput(1.9,2.2){$m$}
      \psline(3.2,1.9)(3.2,.7) \psline(3.4,1.9)(3.4,.7)
      \psline(3.1,.7)(3.1,.5) \psline(3.5,.7)(3.5,.5)
      \psline(3.1,.5)(3.5,.5) \rput(3.3,.3){$j-k$}
      \psline(1,1.9)(1.95,1.3) \psline(1.3,1.9)(1.95,1.3)
      \psline(1.6,1.9)(1.95,1.3) \psline(1.9,1.9)(1.95,1.3)
      \psline(2.2,1.9)(1.95,1.3) \psline(2.5,1.9)(1.95,1.3)
      \psline(2.8,1.9)(1.95,1.3) \psline(1.95,1.3)(1.95,.7)
      \psline(1.95,1.3)(1.65,.7) \psline(1.95,1.3)(1.35,.7)
      \psline(1.95,1.3)(2.25,.7) \psline(1.95,1.3)(2.55,.7)
      \psline(1.3,.7)(1.3,.5) \psline(2.6,.7)(2.6,.5)
      \psline(1.3,.5)(2.6,.5) \rput(1.95,.2){$n$}
      \rput(1.4,1.3){$\varphi$}
    \end{pspicture}
    \caption{Depiction of the partial gluing operation
      $\circ_k$ \label{fig1} where the first $k$ outputs of $\psi$ are
      composed with the last $k$ inputs of $\varphi$}
  \end{figure}
  There is also an induced \emph{partial composition map}, by abuse of
  notation also denoted $\circ_k:\hom(S^m V,S^nV)\otimes
  \hom(S^iV,S^jV)\to \hom(S^{m+i-k}V,S^{n+j-k}V)$ defined by
$$g\circ_k f= 
\binom{m+i-k}{i}\binom{j}{k}s((\iota g s)\circ_k (\iota f s))\iota
$$
The reason for the choice of combinatorial factors in this definition
is due to the property exhibited in Proposition \ref{L:compare-circ_k}
of appendix \ref{A:combinatorics}.
\end{definition}
\begin{remark}
  By convention, $\id^{\otimes \ell}=0$ when $\ell<0$, so that the
  partial gluing map $\circ_k$ is zero when $k>m$ or $k>j$.
\end{remark}
\begin{remark}
  Note that the definition of $g\circ_k f$ for maps $g:S^mV \to S^nV$
  and $f:S^iV\to S^jV$ extends to all of $\hom(SV,\widehat{SV})$ since
  there are only finitely many contributions to $\circ_k$.
\end{remark}

\section{The Weyl algebra}\label{S:Weyl}
In this section we define the Weyl algebra of a vector space $V$ over
a field $\fieldk$.  The coordinate free definition that we give will
be a $\fieldk[[\hbar]]$ algebra on $\hom(SV,\widehat{SV})[[\hbar]]$.

\begin{definition}\label{D:Weyl}
  We define the \emph{Weyl algebra of $V$} to be the
  $\fieldk[[\hbar]]$ algebra $(W(V),\star)$ where
  $$W(V)=\hom(SV,\widehat{SV})[[\hbar]]$$ and
  $$\star:W(V)\otimes_{k[[\hbar]]}W(V)\to W(V)$$ is defined for
  $f,g\in \hom(SV,\widehat{SV})$ by
$$g\star f= g\circ_0 f+(g\circ_1 f)\hbar + (g\circ_2 f)\hbar^2
+\cdots$$
\end{definition}

One frequently encounters this $\star$ product for a finite
dimensional vector space $V$ and ``in coordinates.''  Traditionally,
elements of $V$ are denoted by $q$'s and elements of its dual space
$V^*=\hom(V,\fieldk)$ are denoted by $p$'s (position and momentum).
If $\{q_\ell\}$ is a homogeneous basis for $V$ with dual basis
$\{p^\ell\}$ of $V^*$, elements of $\hom(SV,\widehat{SV})$ are power
series in the $p^\ell$'s and the $q_\ell$'s.  Maps $f:S^iV \to S^jV$
and $g:S^mV\to S^nV$ are expressed in a standard form with all the
$p$'s on the right
\begin{equation}\label{typical_f}
  f=\sum f_{\bar{i}}^{\bar{j}}q_{\bar{j}}
  p^{\bar{i}}\text{ and } g=\sum
  g_{\bar{m}}^{\bar{n}}q_{\bar{n}}p^{\bar{m}}.
\end{equation}
Here, $\bar{j}$, $\bar{i}$, $\bar{m}$, and $\bar{n}$ are
multi-indices, and we will use the same notational conventions that we
do for tensor products: $\bar{j}_k$ and $\bar{j}_{j-k}$ denote the
multi-indices consisting of the first $k$ indices of $\bar{j}$ and the
last $j-k$ indices of $\bar{j}$ respectively, and $\phi(\bar{j})$ will
be the reverse of $\bar{j}$.  We distinguish vectors in the tensor
algebra from the symmetric algebra by using a tensor symbol in the
subscript: if, for example, $\bar{j}=(5,2,8)$, then
$q_{\bar{j}}=q_5\odot q_2\odot q_8\in SV $ and $q_{\otimes
  \bar{j}}=q_5 \otimes q_2 \otimes q_8\in TV.$ The symbol
$\delta_{\bar{i}}^{\bar{j}}$ is zero unless $\bar{i}=\bar{j}$, in
which case $\delta_{\bar{i}}^{\bar{j}}=1$.

The function $f$ in Equation \eqref{typical_f} for instance maps
$q_{\bar{k}} =[q_{\otimes \bar{k}}]$ to
$$\sum_{\sigma\in \sS_k}\epsilon(\sigma,\bar{k}) \delta^{\phi\bar{i}}_{\sigma\bar{k}}f^{\bar{j}}_{\bar{i}}[q_{\otimes \bar{j}}]$$
Note that the $p$'s act in ``reverse'' order, which gives the standard
signs when translated to a tensor algebra context for a graded vector
space.  In other words,
$$f=i!f_{\bar{i}}^{\bar{j}}[q_{\otimes \bar{j}}]p^{\otimes \bar{i}}\iota$$
\begin{lemma}\label{L:Weyl-product}
  The product $g\star f$ is the free product on the formal power
  series in the variables $\{q_\ell,p^\ell\}$ subject to the relations
  \begin{gather*} [p^{\ell},q_{\ell'}]:=p^\ell
    q_{\ell'}-(-1)^{|p^\ell||q_{\ell'}|}q_{\ell'}p^\ell=\hbar
    \delta^{\ell}_{\ell'}\\
    [p^{\ell},p^{\ell'}]=[q_\ell,q_{\ell'}]=0.
  \end{gather*}
\end{lemma}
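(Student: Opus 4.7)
The plan is to verify the equivalence directly on monomials, using the explicit coordinate presentation recorded in Equation \eqref{typical_f}. Both products are $\fieldk[[\hbar]]$-bilinear and continuous in the $\hbar$-adic topology, and both are associative (associativity of $\star$ is asserted in the introduction), so they are determined by their restrictions to the generators $q_\ell \in \hom(\fieldk, V)$ and $p^\ell \in \hom(V, \fieldk)$, together with the rule for bringing products into normal form with all $p$'s on the right. Accordingly, the proof naturally splits into two pieces: verify the canonical commutation relations for $\star$ on generators, and check that the $\star$-product of two monomials already in normal form reduces, via repeated application of those relations, to the expression predicted by the sum over $\circ_k$'s.

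For the first piece, I would compute $p^\ell \star q_{\ell'}$ and $q_{\ell'} \star p^\ell$ directly from Definition \ref{D:Weyl}. Because $p^\ell: V \to \fieldk$ and $q_{\ell'}: \fieldk \to V$, only $\circ_0$ and $\circ_1$ can contribute. The $\circ_0$ term is the uncontracted outer product $q_{\ell'} p^\ell \in \hom(V, V)$ in both orderings, differing only by the Koszul sign $(-1)^{|p^\ell||q_{\ell'}|}$ coming from interchanging the input and output legs; meanwhile $\circ_1$ can appear only in $p^\ell \star q_{\ell'}$, since $q_{\ell'}$ has no input to contract and $p^\ell$ has no output, and a direct evaluation gives $\hbar\, \delta^\ell_{\ell'}$. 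Subtracting yields the first relation. The vanishing commutators $[p^\ell,p^{\ell'}]$ and $[q_\ell,q_{\ell'}]$ follow the same way: only $\circ_0$ survives, and its Koszul sign is exactly the graded symmetry sign.

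For the second piece, I would compute $g \star f$ for two normal-form monomials $g = g^{\bar n}_{\bar m} q_{\bar n} p^{\bar m}$ and $f = f^{\bar j}_{\bar i} q_{\bar j} p^{\bar i}$ in two ways. On the presented algebra, moving $p^{\bar m}$ past $q_{\bar j}$ to return to normal form gives, by the graded Leibniz rule for the bracket, a sum indexed by ways to choose size-$k$ sub-multi-indices of $\bar m$ and $\bar j$ together with a pairing between them; each contraction produces a factor $\hbar\,\delta$ and accumulates Koszul signs from the unshuffles needed to bring the contracted factors adjacent. On the coordinate-free side, the term $(g\circ_k f)\hbar^k$ is, using the representation $f = i!\,f^{\bar j}_{\bar i}[q_{\otimes \bar j}]p^{\otimes \bar i}\iota$ and the definition of $\circ_k$ on $\hom(TV,TV)$, precisely this same sum of Wick contractions; the combinatorial factors $\binom{m+i-k}{i}\binom{j}{k}$ appearing in the definition of the induced gluing on $SV$ are calibrated exactly so that the symmetrizations $\iota s$ built into $(\iota g s)\circ_k (\iota f s)\iota$ produce the correct multinomial count of pairings.

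The main obstacle is the combinatorial and sign bookkeeping of the second step: reconciling $\binom{m+i-k}{i}\binom{j}{k}$ with the count of $(k,m-k)$- and $(k,j-k)$-(un)shuffles that organize the contracted factors, while tracking Koszul signs arising both from the graded commutation relations and from the reversal and symmetrization maps set up in Section \ref{S:SV-TV-review}. This bookkeeping is exactly what Proposition \ref{L:compare-circ_k} in Appendix \ref{A:combinatorics} is designed to handle, so I would carry out the above reduction and then cite that proposition to close the argument rather than redo the combinatorial identity here.
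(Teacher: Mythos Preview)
Your second piece---directly comparing the $\hbar^k$ coefficient of the relation-normalized product $gf$ with $g\circ_k f$ on arbitrary normal-form monomials---is exactly the paper's argument: the paper unwinds both sides applied to a test element $[q_{\otimes\bar v}]$ and matches the resulting sums term by term, finishing with a paragraph reconciling the Koszul signs. Two cautions on your framing. First, the generator check in your first piece is a special case of the second and contributes nothing. Second, the appeal to associativity of $\star$ is unsafe: that associativity is asserted in the introduction but not established independently of this lemma, so invoking it here risks circularity; fortunately your direct monomial computation does not need it. Regarding Proposition~\ref{L:compare-circ_k}: it does package the passage from the symmetric-algebra $\circ_k$ to a shuffle/unshuffle sum of tensor-algebra $\circ_k$'s, and citing it can shorten the unwinding of $g\circ_k f$; but the commutation-relation side is indexed by unshuffles of $\bar m$, unshuffles of $\bar j$, and a pairing $\rho\in\sS_k$, rather than by shuffles of the total inputs and outputs, so you still owe the reindexing and the sign reconciliation that the paper carries out explicitly. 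In short, the route is the paper's; only the packaging differs.
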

\begin{remark}
  The significance of the $p$-$q$ description of the Weyl algebra is
  that the symplectic nature of the situation is illuminated:
  $\hom(SV,\widehat{SV})$ can be viewed as (a completion of) the
  polynomial functions on the symplectic vector space $V\oplus V^*$.
  As usual, the set of such functions forms a Poisson algebra.  In the
  notation of this paper, $f\circ_0 g$ defines a graded commutative
  associative product and the Poisson bracket $\{f,g\}$ has the
  expression $\{f,g\}=f\circ_1 g-(-1)^{|f||g|} g\circ_1 f$.  The star
  product corresponds to a deformation quantization of this Poisson
  algebra.  We have the expected relations; e.g.,
  $\{f,g\}=\lim_{\hbar\to 0}\frac{f\star g-(-1)^{|f||g|}g\star
    f}{\hbar}.$
\end{remark}
In the rest of this section concerns the proof of Lemma
\ref{L:Weyl-product}, which establishes the coincidence of our
coordinate free definition of $W(V)$ with the coordinate dependent
description in terms of generators and relations, which may be
familiar to the reader (who is invited to skip the proof and move on
to Section \ref{S:properads}).

\begin{proof}[Proof of lemma \ref{L:Weyl-product}]
  We want to show that the $\hbar^k$ term of $gf$ using the above
  commutation relations is $g\circ_k f$. Using the relations to put
  the result of $gf$ back in standard form with $p$'s on the right is
  a process that involves commuting all the $p^m$'s in $g$ with the
  $q_j$'s in $f$.  As $p^m$ is moved to the right, each occurence of
  $p^mq_j$ is replaced by the two terms $q_jp^m$ and
  $\hbar\delta_j^m$.  We need to show that the signs and combinatorial
  factors are correct.

  Moving a variable $p^m$ to the right as far as possible involves the
  sum of moving it past all the $q_j$ with replacing it with
  $\hbar\delta^m_j$ as it passes each $q_j$.  This process induces a
  recursive sequence of choices, for each $p^m$, of moving all the way
  to the end or replacing with an $\hbar\delta^m_j$ on one of the
  remaining $q_j$.  A term with an $\hbar^k$ coefficient will come
  from the choice of $|\bar{m}|-k$ of the $p^m$ to move all the way to
  the end, with the remaining $k$ of the $p^m$ interacting with some
  $q_j$.  This further involves the choice of $k$ of the $q_j$ and a
  permutation of $\sS_k$ to govern which of the $k$ $p^m$ interacts
  with which of the $k$ chosen $q_j$.  Then the $\hbar^k$ term of the
  product $gf$, put in standard form, is the following sum over
  $\sigma\in \sS^{-1}_{m,m-k}$, $\tau\in \sS^{-1}_{j,k}$, and $\rho\in
  \sS_k$
$$
\sum\epsilon \delta^{\bar{m}_k}_{\rho\bar{j}_k} \
q_{\bar{n}}q_{\bar{j}_{j-k}} p^{\bar{m}_{m-k}}p^{\bar{i}}
$$
The signs $\epsilon$ will be reconciled at the end of the argument.

Let us evaluate the above expression on $[q_{\otimes \bar{v}}]\in
S^{m-k+i}$.  First, $q_{\otimes \bar{v}}$ is symmetrized, and then
$p^{\otimes \bar{m}_{m-k}}$ is evaluated on the first $m-k$ factors of
each summand of the symmetrization while $p^{\otimes \bar{i}}$ is
evaluated on the following $i$ factors of each summand.  Using the
unique representation of a permutation in $\sS_{m-k+i}$ as the
composition of an element of $\sS_{m-k}\times \sS_i$ with an
$(m-k+i,m-k)$-unshuffle, $gf([q_{\otimes\bar{v}}])$ is the following
sum over $ \pi\in \sS^{-1}_{m-k+i,m-k}$, $\eta\in \sS_{m-k}$,
$\theta\in \sS_i$, and $\sigma,\tau,\rho$ as before:
$$
\sum\epsilon \delta^{\bar{m}_k}_{\rho\bar{j}_k}
\delta^{\bar{m}_{m-k}}_{\eta \bar{v}_{m-k}} \delta^{\bar{i}}_{\theta
  \bar{v}_i} [q_{\otimes \bar{n}}\otimes q_{\otimes\bar{j}_{j-k}}]
$$

Now, let us evaluate $g\circ_k f$ applied to the same element
$[q_{\otimes \bar{v}}]$.  By definition,
$$
g\circ_k f=\binom{m+i-k}{i}\binom{j}{k}m!i!s (\iota s q_{\otimes
  \bar{n}} p^{\otimes\bar{m}}\iota s)\circ_k (\iota s q_{\otimes
  \bar{j}}p^{\otimes \bar{i}}\iota s)\iota
$$
To apply this to $[q_{\otimes\bar{v}}]$, we begin by symmetrizing
$q_{\otimes\bar{v}}$.  Again, it is more convenient to view this as an
unshuffle followed by a product of permutations from $\sS_{m-k}$ and
$\sS_i$.  So we write
$$\iota [q_{\otimes\bar{v}}]=\frac{1}{(m-k+i)!}\sum q_{\otimes \eta \bar{v}_{m-k}}\otimes q_{\otimes \theta\bar{v}_i}$$
Next the second factor is resymmetrized, which has no effect since it
is already symmetric, and then $\iota s q_{\otimes \bar{j}}p^{\otimes
  \bar{i}}$ is applied to it, yielding
$$\frac{m!i!}{(m-k+i)!}\sum \epsilon \delta^{\bar{i}}_{\theta\bar{v}_i}q_{\otimes \eta\bar{v}_{m-k}}\otimes \iota s q_{\otimes\bar{j}} 
$$
We view each summand permutation in the symmetrization of $\bar{j}$,
again, as the composition of product permutations with a
$(j,k)$-unshuffle.  We will sum over $\rho\in \sS_k$ and
$(j,k)$-unshuffles $\tau$, but incorporate the $\sS_{j-k}$
permutations with $\iota$ and $s$.  So this is
$$
\frac{(j-k)!}{j!(m-k+i)!}\sum\epsilon\delta^{\bar{i}}_{\theta\bar{v}_i}
q_{\otimes \eta\bar{v}_{m-k}}\otimes q_{\otimes\rho\bar{j}_k}\otimes
\iota s q_{\otimes \bar{j}_{j-k}}
$$
Applying $\iota s$ to symmetrize the first two factors corresponds to
first symmetrizing each one individually and then shuffling them with
an $(m,m-k)$-shuffle $\sigma^{-1}$.  Since they are both already
symmetric, this gives
$$
\frac{(j-k)!(m-k)!k!}{m!j!(m-k+i)!}\sum\epsilon\delta^{\bar{i}}_{\theta\bar{v}_i}\sigma^{-1}(q_{\otimes
  \eta\bar{v}_{m-k}}\otimes q_{\otimes\rho\bar{j}_k})\otimes \iota s
q_{\otimes \bar{j}_{j-k}}
$$
Applying $\iota s q_{\otimes \bar{n}}p^{\otimes \bar{m}}$ to the first
factor gives
\begin{multline*}
  \frac{(j-k)!(m-k)!k!}{m!j!(m-k+i)!}\sum\epsilon\delta^{\bar{i}}_{\theta\bar{v}_i}
  \delta^{\bar{m}}_{\sigma^{-1} \eta\bar{v}_{m-k}\otimes
    \rho\bar{j}_k}\iota s q_{\otimes \bar{n}}\otimes \iota s
  q_{\otimes
    \bar{j}_{j-k}}\\
  =
  \frac{(j-k)!(m-k)!k!}{m!j!(m-k+i)!}\sum\epsilon\delta^{\bar{i}}_{\theta\bar{v}_i}
  \delta^{\bar{m}_{m-k}}_{\eta\bar{v}_{m-k}}
  \delta^{\bar{m}_k}_{\rho\bar{j}_k} \iota s q_{\otimes
    \bar{n}}\otimes \iota s q_{\otimes \bar{j}_{j-k}}
\end{multline*}
By Lemma \ref{lemma:symtens1}, symmetrizing this whole expression
means that we can ignore the symmetrizations on $\bar{n}$ and
$\bar{j}_{j-k}$.  Including the combinatorial factor
$\binom{m+i-k}{i}\binom{j}{k}m!i!$, we obtain
$$
\sum\epsilon\delta^{\bar{i}}_{\theta\bar{v}_i}
\delta^{\bar{m}_{m-k}}_{\eta\bar{v}_{m-k}}
\delta^{\bar{m}_k}_{\rho\bar{j}_k} [q_{\otimes \bar{n}\otimes
  \bar{j}_{j-k}}]
$$
just as before.

Finally, we check equality of the signs.  $\epsilon(\pi,\bar{v})$,
$\epsilon(\eta,\bar{v}_{m-k})$, $\epsilon(\theta,\bar{v}_i)$,
$\epsilon(\tau,\bar{j})$, $\epsilon(\rho,\bar{j}_k)$, and the sign
$\epsilon(\sigma,\bar{m})$ are all on both the right and left-hand
side.  On the left side, there are also the signs
$(-1)^{|\bar{j}_{j-k}||\bar{m}_{m-k}|}$, $||\bar{m}_k||$, and
$||\bar{v}||$, the first from commuting the noninteracting $q_j$ and
$p^m$ past one another and the second and third the induced sign of
applying a tensor product of $p$'s to a tensor product of $q$'s. These
are not literally the correct signs, but they coincide whenever the
corresponding $\delta$ functions are nonzero.  On the right, there are
the signs $||\bar{i}||$ and $||\bar{m}||$ for the same reason, along
with the sign $(-1)^{|f||\bar{v}_{m-k}|}$ from applying $f$ to the
tensor factors on the right.  Expanding either side with the relations
$$||\bar{m}_{m-k}||\,||\bar{m}||\,||\bar{m}_k||=(-1)^{|\bar{m}_k||\bar{m}_{m-k}|}$$
$$||\bar{v}_{i}||\,||\bar{v}||\,||\bar{v}_{m-k}||=(-1)^{|\bar{v}_i||\bar{v}_{m-k}|}$$
along with noting that $|\bar{m}_k|=|\bar{j}_k|$,
$|\bar{m}_{m-k}|=|\bar{v}_{m-k}|$, and $|\bar{v}_i|=|\bar{i}|$ when
the corresponding $\delta$ functions are nonzero yields the equality
of the two signs.
\end{proof}

\section{Properads and coproperads}\label{S:properads}
The main reference for this section is \cite{V}.  A properad is,
roughly, an algebraic structure that models composable operations to
and from the tensor powers of a vector space.  In the same way that
operads govern gebras with only many to one operations, properads
govern gebras with many to many operations, such as Frobenius and
biLie algebras.  The dual notion of a properad is a coproperad, and
there are a number of ways to obtain a coproperad from a given
properad, and vice versa.  The most naive uses finite dimensional
pieces of a properad and dualizes each piece individually.  Starting
from the properad $\mathbf{P}$, this yields the coproperad
$\mathrm{co}\mathbf{P}$.  A more conceptually elegant method of
dualization is the bar or cobar construction.  The main result of this
paper, again, is that degree one elements of square zero in the Weyl
algebra $W(V)$, as discussed in the previous section, are in one to
one correspondence with $\cobar(\cofrob)$-algebra structures on $V$.

To describe algebras over $\cobar(\cofrob)$, we first define the
Frobenius coproperad $\cofrob$, then the cobar construction, and give
a presentation of the properad $\cobar(\cofrob)$.  Finally, we will
define algebras over a properad and obtain the relations on an algebra
over the particular properad in question.

\subsection{Preliminaries;  notation}
We now recall the notions of properad and coproperad, and algebras
over properads, cf. \cite{V}.
\begin{definition}
  A \emph{finite $n$-level directed graph $G$} consists of a triple
  $(\{V_i\},\{F_v\},\{\varphi_i\})$, given by the following data:
  \begin{enumerate}
  \item A finite ordered set $V_i$ of vertices on level $i$, for $i\in
    \{0,\ldots, n+1\}$.  $V_0$ and $V_{n+1}$ are called the incoming
    and outgoing vertices of the graph $G$, respectively.
  \item For each vertex $v\in \bigcup V_n$, two finite sets
    $F_v^{\text{in}}$ and $F_v^{\text{out}}$ of directed incoming and
    outgoing half-edges incident at $v$, with $|F^{\text{in}}_v|=0$
    and $|F^{\text{out}}_v|=1$ for $v\in V_0$, and
    $|F^{\text{in}}_v|=1$ and $|F^{\text{out}}_v|=0$ for $v\in
    V_{n+1}$. We denote by $F_v=F^{\text{in}}_v \sqcup
    F^{\text{out}}_v$ the disjoint unit of the incoming and outgoing
    half-edges.
  \item For $i\in \{0,\ldots, n\}$, a bijection
$$
\varphi_i:\bigcup_{v\in V_i} F_v^{\text{out}}\to \bigcup_{v\in
  V_{i+1}} F_v^{\text{in}}
$$ that joins outgoing half-edges of one level and incoming half-edges of the next.  $\varphi_0$ and $\varphi_n$ reorder the overall incoming and outgoing edges of the graph.
\end{enumerate}
\begin{figure}[h]
  \resizebox{!}{5cm}{
    \begin{pspicture}(-2,.7)(8.5,7.7)
      \psline[linestyle=dashed](0,7)(7,7)
      \psline[linestyle=dashed](0,5.5)(7,5.5)
      \psline[linestyle=dashed](0,4)(7,4)
      \psline[linestyle=dashed](0,2.5)(7,2.5)
      \psline[linestyle=dashed](0,1)(7,1) \rput(-.5, 7.5){level}
      \rput(-.5,7){$0$} \rput(-.5,5.5){$1$} \rput(-.5,4){$2$}
      \rput(-.5,2.5){$3$} \rput(-.5,1){$4$}
      \cnodeput[fillstyle=solid](1.5,7){v01}{}
      \cnodeput[fillstyle=solid](3.5,7){v02}{}
      \cnodeput[fillstyle=solid](5.1,7){v03}{}
      \cnodeput[fillstyle=solid](1.5,5.5){v11}{}
      \cnodeput[fillstyle=solid](4.3,5.5){v12}{}
      \cnodeput[fillstyle=solid](5.9,5.5){v13}{}
      \cnodeput[fillstyle=solid](1.1,4){v21}{}
      \cnodeput[fillstyle=solid](2.7,4){v22}{}
      \cnodeput[fillstyle=solid](4.3,4){v23}{}
      \cnodeput[fillstyle=solid](5.9,4){v24}{}
      \cnodeput[fillstyle=solid](1.5,2.5){v31}{}
      \cnodeput[fillstyle=solid](3.5,2.5){v32}{}
      \cnodeput[fillstyle=solid](5.1,2.5){v33}{}
      \cnodeput[fillstyle=solid](3.5,1){v41}{}
      \cnodeput[fillstyle=solid](5.1,1){v42}{}
      \nccurve[angleA=270, angleB=90]{v01}{v11} \nccurve[angleA=300,
      angleB=60]{v13}{v24} \nccurve[angleA=290, angleB=110]{v02}{v12}
      \nccurve[angleA=250, angleB=70]{v03}{v12} \nccurve[angleA=270,
      angleB=90]{v13}{v24} \nccurve[angleA=240, angleB=120]{v13}{v24}
      \nccurve[angleA=270, angleB=90]{v32}{v41} \nccurve[angleA=270,
      angleB=90]{v33}{v42} \nccurve[angleA=240, angleB=90]{v11}{v21}
      \nccurve[angleA=300, angleB=140]{v11}{v23} \nccurve[angleA=290,
      angleB=70]{v12}{v23} \nccurve[angleA=210, angleB=30]{v23}{v31}
      \nccurve[angleA=210, angleB=70]{v12}{v22} \nccurve[angleA=250,
      angleB=30]{v12}{v22} \nccurve[angleA=280, angleB=110]{v21}{v31}
      \nccurve[angleA=240, angleB=70]{v22}{v31} \nccurve[angleA=260,
      angleB=70]{v23}{v32} \nccurve[angleA=310, angleB=120]{v22}{v33}
    \end{pspicture} }
  \caption{A finite $3$-level directed graph \label{fig2} }
\end{figure}
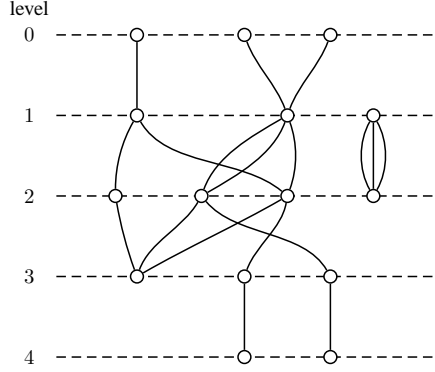
Two graphs $(\{V_i\},\{F_v\},\{\varphi_i\})$ and
$(\{U_j\},\{G_u\},\{\psi_j\})$ are equivalent if there are
order-preserving bijections on the vertices on each level and
bijections of the incoming and outgoing-half-edges which respect the
joining bijections $\varphi$ and $\psi$.  A $L,R$ labelling of a graph
is a pair of bijections from the set $L$ to the incoming level one
half-edges, and a bijection from the outgoing level $n$ half-edges to
the $R$.

The set of finite $n$-level directed graphs up to equivalence is
denoted by $\mathscr{G}^{(n)}$.
\end{definition}
\begin{definition}
  The \emph{geometric realization} of a graph
  $(\{V_i\},\{F_v\},\{\varphi_i\})$ is the topological space, defined
  as the quotient of the disjoint union
$$\Big(\amalg_{v\in \bigcup V_i} *_v \Big)\sqcup \Big(\amalg_{f\in \bigcup_v F_v} I_f\Big), $$ where $*_v$ denotes a one point space and $I_f$ denotes a copy of the unit interval $[0,1]$, divided by the equivalence relation generated by 
\begin{enumerate}\item $0_f\sim *_v$ if $f\in F_v$.
\item $1_{f_1}\sim 1_{f_2}$ if $\varphi_i(f_1)=f_2$ for some $i$.
\end{enumerate}
$G$ is called \emph{connected} if its geometric realization is
connected.  The set of finite connected $n$-level directed graphs with
$k$ incoming and $\ell$ outgoing edges is denoted
$\mathscr{G}^{(n)}_c(k,\ell)$, and let
$\mathscr{G}^{(n)}_c=\sqcup_{k,\ell} \:\mathscr{G}^{(n)}_c(k,\ell)$.
\end{definition}

An \emph{$\sS$-bimodule} in the category of graded vector spaces
(chain complexes) consists of a set of graded vector spaces (chain
complexes) $\{P(m,n)\}$ for $m,n\ge 0$ with commuting left $\sS_m$ and
right $\sS_n$ actions.  The category of $\sS$-bimodules is denoted by
$\mathscr{C}$. There is a functor
$$\boxtimes_c:\mathscr{C}\times \mathscr{C}\to \mathscr{C}$$ which acts on two $\sS$-bimodules $P$ and $Q$ by taking
$$
P\boxtimes_c
Q(k,\ell)=\bigoplus_{\mathscr{G}^{(2)}_c(k,\ell)}\bigotimes_{v\in V_2}
P(|F_v^{\text{out}}|,|F_v^{\text{in}}|)\otimes \bigotimes_{v\in V_1}
Q(|F_v^{\text{out}}|,|F_v^{\text{in}}|)\Big/ \sim,
$$
where $|X|$ denotes the number of elements of a finite set $X$, and
the equivalence relation consists of the following two parts. For one,
we divide out by
\begin{multline*}
  (\bigotimes p_i\otimes \bigotimes
  q_j)_{\{\{V_i\},\{F_v\},\{\varphi_0,\varphi_1,\varphi_2\}\}} \\
  \sim (\bigotimes \sigma_i p_i \rho_i \otimes \bigotimes \tau_j q_j
  \eta_j)_{\{\{V_i\},\{F_v\},\{\varphi_0(\prod
    \eta^{-1}),(\prod\tau^{-1}) \varphi_1(\prod \rho_i),(\prod
    \sigma_i) \varphi_2\}\}}
\end{multline*}
This construction does not have the appropriate $\sS$-bimodule
structure, so we must tensor over $\prod_{v\in V_2}
\sS_{v^{\text{out}}}$ with $\prod_{v\in V_3} \sS_{v^{\text{in}}}$ and
similarly with the incoming. The other equivariance relation is
\begin{multline*}
  (\bigotimes p_i\otimes \bigotimes
  q_j)_{\{\{V_i\},\{F_v\},\{\varphi_0,\varphi_1,\varphi_2\}\}} \\
  \sim (\sigma^{-1} \left(\bigotimes p_i\right) \otimes
  \tau\left(\bigotimes q_j\right) )_{\{\{V_i\},\{F_v\},\{\tau^{-1}
    \varphi_0,\sigma \varphi_1\tau, \varphi_2\sigma^{-1}\}\}}
\end{multline*}
where the action of $\tau$ and $\sigma$ in the compositions with the
$\varphi$ should be taken as acting on blocks of size equal to the
number of outputs or inputs of $p_i$ or $q_j$ as appropriate.  The
actions on the $p_i$ and $q_j$ themselves have signs as usual.

In words, $P\boxtimes_c Q(k,\ell)$ consists of connected two-level
graphs with elements of $Q$ labelling the vertices on the first level
and elements of $P$ labelling the vertices on the second level.  The
labelling elements should be chosen from the pieces $P(k',\ell')$ so
that $k'$ is the number of incoming flags at the vertex and $\ell'$
the number of outgoing flags.

\begin{definition}Let $I$ be the $\sS$-bimodule which has $I(1,1,0)=k$
  and $I(n,m,\chi)=0$ otherwise.
\end{definition}

The functor $\boxtimes_c$, along with the identity object $I$, makes
$\mathscr{C}$ a monoidal category.  This means that there is a natural
transformation expressing the associativity of $\boxtimes_c$ and two
more expressing that $I$ is a left and right identity for
$\boxtimes_c$.

\begin{definition}
  A \emph{properad} $\mathbf{P}$ is a monoid in the category
  $\mathscr{C}$.  This data comprises two morphisms:
  \begin{enumerate}
  \item A composition morphism $\mu:\mathbf{P}\boxtimes_c
    \mathbf{P}\to\mathbf{P}$, and
  \item A unit morphism $\iota:I\to \mathbf{P}$.
  \end{enumerate}
  Composition must satisfy associativity up to the natural
  transformation for associativity of $\boxtimes_c$ as well as left
  and right unit properties (e.g., $\mu\circ (\iota\boxtimes \id)\sim
  \id$ via the natural transformation between $I\boxtimes \mathbf{P}$
  and $\mathbf{P}$).
\end{definition}
\begin{definition}
  A \emph{coproperad} $\mathbf{C}$ is a comonoid in the category
  $\mathscr{C}$. This data again comprises two morphisms:
  \begin{enumerate}
  \item A decomposition morphism $\Delta:\mathbf{C}\to
    \mathbf{C}\boxtimes_c\mathbf{C}$, and
  \item A counit morphism $\eta:\mathbf{C}\to I$.
  \end{enumerate}
  Decomposition must satisfy coassociativity (up to the natural
  transformation for associativity of $\boxtimes_c$) as well as left
  and right counit properties dual to the unit properties.
\end{definition}
\begin{example}
  If $(V,d)$ is a chain complex (whit differential of degree $|d|=1$),
  then $T^kV$ has the induced structure of a chain complex where
  $d(\bar{v})=dv_1\otimes v_2\otimes\cdots\otimes
  v_k+\cdots+(-1)^{|v_1|+\cdots+|v_{i-1}|}v_1\otimes\cdots\otimes
  v_{i-1}\otimes dv_i\otimes\cdots\otimes v_k+\cdots$.  If $(V,d)$ and
  $(V',d')$ are chain complexes, then $\hom(V,V')$ has the induced
  structure of a chain complex with differential $f\mapsto d' f -
  (-1)^{|f|}fd$.  Thus, if $(V,d)$ is a chain complex, then
  $\End(V)(m,n):=(\hom(T^mV,T^nV),D)$ is a chain complex, where $D$ is
  the induced differential.  There are commuting left $\sS_m$ and
  right $\sS_n$ actions and the obvious composition maps, so $\End(V)$
  is a properad.  Note that in the graded context, the symmetric
  actions respect the grading, so that, for example,
  $\psi\sigma(\bar{v})=\psi(\sigma\bar{v})$.
\end{example}
\begin{definition}
  By definition, $(V,d)$ has the structure of an \emph{algebra} over
  the properad of chain complexes $\mathbf{P}$, if there is a properad
  morphism $\mathbf{P}\to \End(V)$.

  Explicitly, this means that there are degree zero maps
  $\mathbf{P}(m,n)\to \hom(T^mV,T^nV)$ which are equivariant with
  respect to both the $\sS_m$ and $\sS_n$-actions, and such that
  composition in $\mathbf{P}$ corresponds to actual composition of
  maps between tensor powers of $V$.  Furthermore, the differential
  $d$ in $\mathbf{P}(m,n)$ corresponds to the differential $D$ in the
  Hom complex.
\end{definition}

\subsection{The $\cofrob$ coproperad}\label{SS:cofrob}
We define an object $\{\cofrob(m,n,\chi)\}$ in the category
$\mathscr{C}$ of $\sS$ bimodules and morphisms $\eta:\cofrob\to I$ and
$\Delta:\cofrob \to \cofrob \boxtimes_c \cofrob$ as follows:
\begin{enumerate}
\item For $m$, $n\ge 1$ and $\chi\ge m+n-2$ and of the same parity as
  $m+n$, we set $\cofrob(m,n,\chi)=k$.  This corresponds to the unique
  $m$ to $n$ Frobenius algebra operation of ``genus''
  $\frac{\chi-m-n}{2}$.  For all other choices of $m$, $n$, and
  $\chi$, $\cofrob(m,n,\chi)=0$.
\item All the $\sS_m$ and $\sS_n$ actions are trivial.
\item The map $\eta$ is projection onto the factor $\cofrob(1,1,0)$.
\item The map $\Delta$ is more involved to describe, and will be done
  below.
\end{enumerate}
We first examine $\cofrob\boxtimes_c\cofrob$.  This consists of all
connected two-level trees labeled by elements of $\cofrob$ of the
appropriate grading, up to equivalence.  Since the symmetric group
actions are trivial, only the information of the number of edges
between two vertices is important in a two-level graph, but not the
actual combinatorics of how the flags are connected.  Therefore, a
two-level tree with $m$ inputs and $n$ outputs marked with elements of
$\cofrob$ up to equivalence consists of:
\begin{enumerate}
\item Partitions of $\{1,\ldots, m\}$ and $\{1,\ldots, n\}$ into
  nonempty sets $u_i$ and $v_j$, where $u_i$ denotes the vertices on
  the first level and $v_j$ the vertices on the second level.  This is
  taken up to reordering of the vertices, with the induced sign.
\item For each pair $(u,v)$ from $V_1\times V_2$, a nonnegative number
  $e(u,v)$ of edges from $u$ to $v$ so that the total number of edges
  $e(u)=\sum_v e(u,v)$ and $e(v)=\sum_u e(u,v)$ are positive.
\item A weight $\chi$ for each $u$ which is of the same parity and at
  least $|u|+e(u)-2$, and likewise for $v$.
\end{enumerate}
Furthermore, the geometric realization of the graph must be connected.
Then the decomposition map $\Delta$ takes $\cofrob(m,n,\chi)\cong k$
into the direct sum over such two level graphs of a tensor product of
$\cofrob(m',n',\chi')$.  It is just the zero map on any zero summand
and a combinatorial factor $\eta_G$ times the canonical isomorphism of
$k$ with $k^{\otimes i}$ on the summand spanned by a graph $G$ where
each factor of the tensor product is $k$.  We define the combinatorial
factor $\eta_G$ as the product over pairs $(u,v)$ of vertices from
$V_1\times V_2$ of $\frac{1}{e(u,v)!}$.

\begin{remark}$\cofrob$ can be interpreted in some sense as the naive
  dual of the Frobenius properad or as the Koszul dual of a
  commutative, rather than skew, version of the involutive biLie
  properad.  We thought it more expedient to define it directly,
  rather than introduce an additional level of duality.
\end{remark}

\begin{proposition}The data $(\cofrob, \Delta, \eta)$ defines a
  coproperad.
\end{proposition}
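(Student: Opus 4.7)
The plan is to verify the three defining properties of a coproperad: the left and right counit axioms, and coassociativity.

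For the counit axioms, applying the composite $(\eta \boxtimes_c \id) \circ \Delta$ to $c \in \cofrob(m,n,\chi)$ retains only those 2-level graphs in $\Delta(c)$ whose top vertices all lie in $\cofrob(1,1,0)$, i.e.\ are ``trivial'' identity operations of type $(1,1,0)$. Since each top vertex must have positive total degree and the whole graph must be connected, there is a unique such 2-level equivalence class: $c$ sits at the bottom, connected by $n$ single edges to $n$ trivial top vertices. Its combinatorial weight is $\eta_G = \prod 1/1! = 1$, and after applying $\eta$ and invoking the canonical identification $I \boxtimes_c \cofrob \cong \cofrob$, the composite returns $c$ itself. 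The right counit axiom is verified symmetrically.

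For coassociativity, I need $(\Delta \boxtimes_c \id) \circ \Delta = (\id \boxtimes_c \Delta) \circ \Delta$ as maps $\cofrob \to \cofrob \boxtimes_c \cofrob \boxtimes_c \cofrob$. The target decomposes as a direct sum over equivalence classes of connected 3-level graphs $G$, so the identity reduces to matching the scalar coefficient of each such $G$ on both sides. Unpacking the LHS, the coefficient of $G$ equals $\eta_{G_1} \sum_{[G_u]} \prod_u \eta_{G_u}$, where $G_1$ is the unique 2-level graph whose top vertices are in bijection with the connected components of the $V_2(G) \cup V_3(G)$ subgraph (with $E_{23}(G)$ edges), and the sum runs over equivalence classes of 2-level sub-decompositions $[G_u]$ of the top vertices of $G_1$ that, inserted back into $G_1$, reassemble $G$. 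The analogous formula holds on the RHS using the 1-2 subgraph of $G$. The key subtlety, easy to miss, is that several equivalence classes $[G_u]$ can all assemble into the same $G$---corresponding to distinct partitions of the intermediate half-edges across the underlying level-2 vertices---and must all be summed.

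The main obstacle is proving the combinatorial identity that these two weighted sums agree. My plan is to reformulate both sides via a labeled picture in which every intermediate half-edge is given a distinguishing label. In this formulation, the factor $\eta_G = \prod 1/e(u,v)!$ emerges precisely as the reciprocal of a symmetric-group orbit size, and summing equivalence classes with weight $\eta_G$ is equivalent to summing labeled 2-level graphs with uniform weight~$1$. Both iterations of $\Delta$ then enumerate the same collection of labeled 3-level graphs: the labeled data underlying $G$ does not depend on whether one first labels the 1-2 edges or the 2-3 edges. Consequently the two weighted sums on equivalence classes are equal. The technical heart of the argument is the careful bookkeeping of orbit sizes, combinatorial factors, and partitions of half-edges across the two nested iterations, verifying that each $[G_u]$-multiplicity and weight combine exactly as required on each side.
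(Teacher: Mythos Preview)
Your counit argument is essentially the paper's, just checked on the opposite side. For coassociativity, both you and the paper reduce to matching the scalar coefficient attached to each connected 3-level graph $G$, and both correctly identify the unique intermediate 2-level collapse through which each side must factor (your $G_1$ is the paper's $G_{23}$, and your sum over $[G_u]$ packages what the paper writes as a multiplicity $\rho_v$ times a fixed $\eta_{H_v}$). From there the arguments diverge. The paper computes $\rho_v$ explicitly as a multinomial-type expression and shows that both sides equal the symmetric quantity
\[
\prod_{u,w}\frac{1}{e(u,w)!}\;\prod_{w,z}\frac{1}{e(w,z)!}\;\prod_{i}\frac{1}{|W_i|!},
\]
where the $W_i$ are the classes of middle-level vertices of $G$ that can be interchanged by an automorphism. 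Your plan is instead to pass to a labeled model where each iteration of $\Delta$ becomes a uniform sum, so that coassociativity follows because both orders enumerate the same labeled 3-level graphs.

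This species-style reformulation is a legitimate alternative and is conceptually cleaner once set up, but your key assertion that ``$\eta_G$ emerges precisely as the reciprocal of a symmetric-group orbit size'' is more delicate than stated. It holds for 2-level graphs because the external input and output labels pin down every vertex, leaving only permutations of parallel edges as residual symmetry; that gives exactly $\prod e(u,v)!$. In a 3-level graph, however, the middle-level vertices carry no external labels and may be permuted by automorphisms of $G$---this is precisely the origin of the paper's $|W_i|!$ factors, and it means that the passage from labeled to unlabeled 3-level graphs is not governed by edge-permutations alone. Your two-step labeling must therefore track how the second application of $\Delta$ distributes the already-labeled edges of $G_1$ among the newly created middle vertices, and why the resulting overcount matches on both sides. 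This can be done, but it is where the substance lies, and the proposal does not yet make it explicit; the paper's direct multinomial computation handles exactly this point by brute force.
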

\begin{proof}
  We have to check coassociativity for $\Delta$, and the left and
  right counit properties for $\eta$.  To see that $\Delta$ is
  coassociative, consider $\cofrob^{\boxtimes_c 3}$.  This is the
  vector space spanned by three-level graphs marked by $\cofrob$. Let
  edges between the first and second level of vertices generate an
  equivalence relation on vertices; then let the equivalence classes
  be the top level of vertices of a new graph, with incoming flags the
  disjoint union of the incoming flags of the constituent vertices in
  the upper level of the equivalence class and outgoing flags the
  disjoint union of the outgoing flags of the constituent vertices in
  the lower level of the equivalence class.  Let the grading of an
  equivalence class be the sum of the gradings of its member vertices.
  Let the third level of vertices of the original graph be the bottom
  level of vertices of this new graph; then the old (three level)
  graph is part of the image of the new (two level) one under
  $\Delta\boxtimes_c Id$.  If the original graph is $G$, call this
  graph $G_{12}$.

  Given a vertex $[v]$ in the first level of $G_{12}$, that is, an
  equivalence class of vertices of $G$, we construct a two level graph
  marked by $\cofrob$ denoted $H_v$.  Let the vertices on the first
  and second levels of $H_v$ be the vertices of $G$ in $[v]$; let the
  incoming flags, the vertex weights, and the edges between the first
  and second levels be induced by the corresponding data in $[v]$.
  Let the number of outgoing flags be determined by $[v]$; however
  $[v]$ does not induce a labelling, so choose an arbitrary labelling
  for the outgoing vertices.  Intuitively, $H_v$ represents $[v]$ as
  an independent graph.

  A similar construction can be performed for the second and third
  level of the graph $G$ and will yield a two-level graph $G_{23}$
  which has the old graph as part of its image under $Id\boxtimes_c
  \Delta$.  We similarly get $H_v$ for $[v]$ in the second level
  vertex set of $G_{23}$.

  Both $G_{12}$ and $G_{23}$ are part of the image under $\Delta$ of
  the graph $G_{123}$ obtained from the original by collapsing all of
  the vertices and internal edges to a single vertex.  Let $\pi_G$
  denote the linear projection onto the one dimensional subspace
  spanned by $G$.  Then $\pi_{G}(\Delta \boxtimes_c Id)\circ \Delta
  [G_{123}]$ is equal to $\pi_G (\Delta\boxtimes c Id) \circ
  \pi_{G_{12}}\circ \Delta [G_{123}]$ because no other two level
  graphs can yield $G$ under expansion of the vertices on the first
  level.  The cognate statment is true for $G_{23}$.

  So to show coassociativity, it is enough to show that for a marked
  three-level graph $G$,
$$
\pi_G \circ (\Delta \boxtimes_c Id) \circ \pi_{G_{12}}\circ \Delta
[G_{123}]= \pi_G \circ (Id\boxtimes_c \Delta)\circ \pi_{G_{23}}\circ
\Delta [G_{123}].
$$
No signs are introduced in either of the applications of $\Delta$, so
in order for this equality to be true, it is only necessary that the
combinatorial factors agree.  If $V_{12}$ is the vertex set of the
first level of $G_{12}$, the level consisting of equivalence classes,
and likewise $V_{23}$, then the above equality is, at the level of
combinatorial factors,
$$
\eta_{G_{12}}\prod_{[v]\in V_{12}}\rho_v\eta_{H_v}=
\eta_{G_{23}}\prod_{[v]\in V_{23}}\rho_v\eta_{H_v}
$$
where for $[v]$ in $V_{12}$, $\eta_{H_v}$ is the product
$\frac{1}{e(u,w)!}$ for $u,w$ in $[v]$, and $\rho_v$ counts the number
of two-level graphs which are similar enough to the graph $H_v$ that
the projection of $\Delta [v]$ on the summand spanned by them
contributes to this projection on the $G$-summand.

The product of the $\eta_{H_v}$ over $[v]$ in $V_{12}$ is the product
of $\frac{1}{e(u,w)!}$ over all pairs of vertices from the first and
second levels of $G$; for pairs where the two vertices come from
different equivalence classes, $e(u,w)$ must be zero, so the
contribution from such pairs is $1$.  $\eta_{G_{12}}$ is the product
of $\frac{1}{e([v],z)!}$ for $[v]$ in $V_{12}$ and $z$ in the third
level of $G$, where $e([v],z)=\sum_{w\in [v]}e(w,z)$.

To see this equality, consider $G_{12}$.  Fix a labelling on the
incoming flags of the second level vertices.  Then there is some
finite number $\rho_v$ of relabellings of the outgoing flags of $H_v$
which are compatible with the given labelling, in the sense that if
such relabellings are chosen for each $[v]$, then connecting the
relabelled $H_v$s along the identity permutation to the labelled
incoming flags of the second level vertices of $G_{12}$ yields a graph
isomorphic to $G$ as a three-level graph with vertices marked by
$\cofrob$.

To justify the notation, the $\rho_v$ must be independent of one
another; this occurs because distinct $[v]$ correspond to distinct
subsets of the incoming flags of $G$ so that each incoming flag of the
second level of $G_{12}$ must be connected to a unique $[v]$.  So the
outgoing flags from each relabelled $H_v$ can be considered
seperately, meaning the equality is well-defined.

It remains to calculate $\rho_v$.  This counts the number of ways of
relabelling the outgoing flags of $H_v$ to be consistent with the
incoming flags of the third level vertices of $G$.  By equivalence and
by the trivial symmetric action on a vertex $w$ in the second level of
$G$ in $[v]\in V_{12}$, any relabelling is equivalent to one where the
order of the outgoing flags at $w$ respects a fixed order of the third
level vertices of $G$.

Now consider a vertex $z$ on the third level of $G$ and a vertex
$[v]\in V_{12}$.  To be consistent, a relabelling must associate the
incoming flags of $z$ associated to $[v]$ to the specific outgoing
flags of the constituent $w$ determined by the order in the previous
paragraph.  Two relabellings from $[v]$ to $z$ are equivalent if they
differ only by a permutation of the outgoing flags of $w$.  Also, if
there is an isomorphism of $G$ that interchanges $w$ and $w'$, then
two relabellings interchanging the labels of their outgoing flags are
equivalent.

Then we are counting partitions of $e([v],z)$ into pieces of size
$e(w,z)$, up to simultaneous relabelling of the partitions
corresponding to $w$ and $w'$ for all $z$ if there is an isomorphism
of $G$ interchanging them.  The number of ordered partitions is
determined by a familiar combinatorial formula:
$$
\frac{e([v],z)!}{\prod_{w\in [v]} e(w,z)!}.
$$
So the number of relabellings $\rho_v$ is the product of these factors
for all $z$ divided by permutations of second level vertices along
isomorphisms of $G$.  Suppose the vertices on the second level of
$H_v$ are divided into equivalence classes $W_1,\ldots, W_r$, where
$w$ and $w'$ are in the same equivalence class if there is an
isomorphism of $G$ interchanging them.  Note that if there is an
isomorphism interchanging any two vertices on the second level of $G$,
then they must be in the same equivalence class in $V_{12}$ and in
$V_{23}$.  Then we obtain
$$
\rho_v=\frac{\prod_z e([v],z)!}{\prod_{w\in [v],z} e(w,z)! \prod_1^r
  |W_i|!}.
$$

Now the left hand side of the equality that will prove coassociativity
is
\begin{multline*}
  \prod_{([v],z)}\frac{1}{(e([v],z)!}\prod_{[v]}\frac{\prod_ze([v],z)!}{\prod_{w\in
      [v],z} e(w,z)!\prod_1^r |W_i|!}\prod_{u,w}\frac{1}{e(u,w)!}  \\=
  \prod_{w,z}\frac{1}{e(w,z)!}\prod_{u,w}\frac{1}{e(u,w)!}\prod_{W_i}\frac{1}{|W_i|!}
\end{multline*}
where the products are taken over pairs $w,z$ from the second and
third levels of $G$, pairs $u,w$ from the first and second levels of
$G$, and all equivalence classes of second level vertices of $G$.

A similar calculation shows that the right hand side is the same,
showing coassociativity.

To see that $\cofrob$ is counital, note that one factor of the
decomposition of any element $x$ of $\cofrob$ is the two-level graph
with $x$ on top and only copies of $\cofrob(1,1,0)$ on the bottom.
Applying $\id\boxtimes_c \eta$ to this yields $x$.  On the other hand,
any other factor of the decomposition will have something other than
$\cofrob(1,1,0)$ on the bottom, and $\id\boxtimes_c \eta$ will yield
$0$.  A similar argument applies for the left counit property.
\end{proof}

\subsection{The cobar construction}\label{SS:cobar}
Next it is necessary to discuss the cobar construction, which begins
with a coproperad $\mathbf{C}$ and generates a properad
$\cobar(\mathbf{C})$; cf. \cite[section 4]{V}.  This properad is
freely generated on the constituent spaces of the associated
$\sS$-module $\bar{\mathbf{C}}[-1]$, which in this context can be
interpreted as $\mathbf{C}_{m,n,g}/\mathbf{C}_{1,1,0}$ with a shift in
grading, putting all the generators in degree negative one.

This free generation is under properadic composition and the symmetric
group actions (subject to the associativity and equivariance
relations), as a properad of graded vector spaces.  The decomposition
maps $\Delta_{m',n'}^{k,g'}$ enter the picture in terms of a
differential $d$ on $\cobar(\mathbf{C})_{m,n,g}$ which makes this into
a properad of chain complexes.

A generic basis element of the free properad on an $\sS$-module $V$ is
a tree labelled by elements of $V$.  So fixing an order on the
vertices of the tree, and on the edges connecting two vertices, it is
a tensor product of elements from $V(m,n)$.  Specifying an element
with homogeneous grading, it is a tensor product of elements from
$V(m,n,\chi)$.  The differential acts on this space as a derivation,
meaning that up to sign, it is determined by its action on $V$ itself:
$$
d(v_1\otimes\cdots v_k)=dv_1\otimes\cdots
v_k+\cdots+(-1)^{|v_1|+\cdots+|v_{k-1}|}v_1\otimes \cdots dv_k
$$
The differential acts on $V$ as a restriction of the decomposition map
$\Delta$.  Call vertices in a graph labelled with the identity trivial
vertices (in this case this is any vertex with $m=n=1$ and $\chi=0$).
There is a quotient map on $V\boxtimes_c V$ which kills any graph with
more or fewer than two nontrivial vertices.  Note that because the
grading of the identity map is even, we can also forget the ordering
on the vertices on each level, as their permutation will not introduce
a sign.  The composition of this quotient with decomposition gives the
action of $d$ on $V$ in the cobar construction.  Coassociativity and
the shift in the grading guarantee that $d^2=0$.

\subsection{The properad $\cobar(\cofrob)$}
Now we describe the properad $\cobar(\cofrob)$.  First, without the
differential, it is just the free properad on the reduced version
$\overline{\cofrob}$, i.e. an element of the $(r,t,\chi)$ piece is a
connected properad composition of elements of $\overline{\cofrob}$ of
grading $(r_i,t_i,\chi_i)$ with total grading $(r,t,\chi)$ under the
rules for the composition.

The only relations, other than those of equivariance and
associativity, are those imposed by $d$.  Thus, we need to determine
how $d$ acts on $\cobar(r,t,\chi)$.  Its image is contained in
two-level graphs with appropriate total grading and only one
nontrivial vertex on each level.  The $r$ inputs and $t$ outputs need
to be divided between the two non-trivial vertices.  There needs to be
some positive number of output flags from the first vertex connected
to input flags from the second.  Finally, any remaining grading must
be shared between the two vertices. Therefore, we take a sum over
$1\le i\le r$, $1\le k\le \frac{1}{2}(\chi-m-n)+2$, $k\le j\le t+k-1$,
$i+j\le \chi_1\le \chi-2k$, $(r,r-i)$ shuffles $\tau$, $(t,t-j)$
unshuffles $\sigma$, along with $m$, $n$, and $\chi_2$ which are
induced as $i+m-k=r$, $j+n-k=t$, $\chi_1+\chi_2=\chi$.  Using this
sum, we have
$$
d(\mathbf{1}_{r,t,\chi})=\sum \frac{1}{k!}\tau (\mathbf{1}_{m,
  n,\chi_2}\otimes \mathbf{1}_{i,j,\chi_1})\sigma
$$
The bounds on $i,j,k,\chi_1$ ensure that all of the indices here have
the appropriate size.  If $\chi_1$ or $\chi_2$ has the wrong parity
then the term is zero.  Since the order of the vertices on each level
doesn't matter and the symmetric actions are trivial, we can fix a
convention without introducing signs, namely that on the first level,
all of the trivial vertices precede the nontrivial vertex; on the
second level the nontrivial vertex precedes the trivial ones.
 
At this point it is convenient to regrade by ``genus'' instead of by
``Euler characteristic.''  This means that we replace the grading
$\chi$, which is at least $m+n-2$ and of the same parity as $m+n$ with
$g=\frac{1}{2}(\chi+2-m-n)$, which is then just nonnegative.  With
this regrading, properadic composition of two vertices along $k$ flags
has degree $k-1$ instead of $0$.  Rewriting $d$ with this grading we
get
$$
d(\mathbf{1}_{r,t,g})=\sum \frac{1}{k!}\tau (\mathbf{1}_{m,n,
  g_2}\otimes \mathbf{1}_{i,j, g_1})\sigma
$$
where $1\le k\le g+1$ and $0\le g_1\le g+1-k$, while $i$, $j$,
$\sigma$, and $\tau$ are taken over the same range as before.  Now
$g_1+g_2+k-1=g$.

\subsection{Algebras over $\cobar(\cofrob)$}\label{algebras}
We now state and prove our main theorem.
\begin{theorem}\label{T:main}
  There is a one to one correspondence between algebra structures over
  $\cobar(\cofrob)$ on $V$ and elements $H$ of degree $-1$ in $W(V)$
  such that $H\star H=0$.
\end{theorem}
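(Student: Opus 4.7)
The plan is to identify both sides with the same raw data---degree $-1$ maps $(\sigma_{(g)})_i^j\colon S^iV\to S^jV$ for $i,j\ge 1$ and $g\ge 0$---and then check that the two quadratic systems of relations they satisfy coincide.

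First I would set up the data bijection. Because $\cobar(\cofrob)$ is freely generated as a properad by the $\sS$-bimodule $\overline{\cofrob}[-1]$, a morphism of graded properads $\phi\colon\cobar(\cofrob)\to\End(V)$ is exactly an $\sS$-equivariant choice of image for each generator. Since the $(i,j,g)$ piece of these generators is one-dimensional, in cohomological degree $-1$, with trivial $\sS_i\times\sS_j$ action, $\phi(\mathbf{1}_{i,j,g})$ must be an $\sS_i\times\sS_j$-invariant degree $-1$ element of $\hom(T^iV,T^jV)$. In characteristic zero, symmetrization $\iota\circ(-)\circ s$ identifies such invariants with degree $-1$ maps $S^iV\to S^jV$. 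Assembling them into $H = \sum_{g\ge 0}\hbar^g\sigma_{(g)}$ with $(\sigma_{(g)})_i^j := \phi(\mathbf{1}_{i,j,g})$ yields a linear bijection between such graded properad morphisms and degree $-1$ elements of $W(V)$ satisfying the technical vanishing assumption.

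Next I would translate the two differential conditions into each other. Since $V$ carries no differential, $\phi$ is a morphism of dg-properads exactly when $\phi(d\mathbf{1}_{r,t,g})=0$ for every generator $\mathbf{1}_{r,t,g}$. Using the formula
$$d(\mathbf{1}_{r,t,g}) = \sum \tfrac{1}{k!}\,\tau\,(\mathbf{1}_{m,n,g_2}\otimes\mathbf{1}_{i,j,g_1})\,\sigma$$
and the fact that $\phi$ respects properadic composition, this becomes an equation in $\hom(T^rV,T^tV)$ consisting of shuffled and unshuffled partial gluings $\phi(\mathbf{1}_{m,n,g_2})\circ_k\phi(\mathbf{1}_{i,j,g_1})$ with $k\ge 1$ and $g_1+g_2+k=g+1$. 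On the Weyl side, the coefficient of $\hbar^G$ in the $\hom(S^rV,S^tV)$ slot of $H\star H$ is $\sum_{g_1+g_2+k=G}\sigma_{(g_2)}\circ_k\sigma_{(g_1)}$. The $k=0$ terms collect into $H\circ_0 H$, which vanishes automatically because $\circ_0$ is graded commutative and $H$ is odd. Hence vanishing of the $\hbar^{g+1}$ coefficient is equivalent to the vanishing of its $k\ge 1$ contributions alone, matching the indexing of the cobar formula term by term; the $G=0$ coefficient is then $\sigma_{(0)}\circ_0\sigma_{(0)}=0$ for free, corresponding to the absence of a cobar generator of genus $-1$.

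The main obstacle is establishing that term-by-term equality: the numerical factors must cooperate exactly. Three sources must align---the factor $\frac{1}{k!}$ in $d$ (inherited from $\eta_G=\frac{1}{e(u,v)!}$ in the decomposition of $\cofrob$), the sums over $(r,r-i)$-shuffles $\tau$ and $(t,t-j)$-unshuffles $\sigma$ that carry the naive composition in $\End(V)$ into its $\sS$-equivariant hull, and the binomial factors $\binom{m+i-k}{i}\binom{j}{k}$ built into the symmetric-algebra $\circ_k$ of Definition~\ref{D:Weyl}. The identity that reconciles them is Proposition~\ref{L:compare-circ_k} in Appendix~\ref{A:combinatorics}, which compares the tensor-level partial gluing applied to symmetrized maps $\iota g s$, $\iota f s$ with the symmetric-level $g\circ_k f$. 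Once that identity is applied, and the Koszul signs coming from $\iota$, $s$ and the shuffles/unshuffles are tracked (in the style of the sign reconciliation in the proof of Lemma~\ref{L:Weyl-product}), $\phi(d\mathbf{1}_{r,t,g})=0$ becomes the $(\hbar^{g+1},S^rV\to S^tV)$ component of $H\star H=0$, and the data bijection of the first step upgrades to the bijection claimed in the theorem.
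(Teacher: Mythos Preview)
Your overall strategy---unpack the quasifree structure of $\cobar(\cofrob)$, pass between tensor and symmetric powers via $\iota$ and $s$, and appeal to Proposition~\ref{L:compare-circ_k} for the combinatorics---is exactly the paper's, and your observation that the $k=0$ contributions to $H\star H$ die by graded commutativity of $\circ_0$ is a clean point the paper leaves implicit. Two things, however, need repair.

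The first is the differential. You write ``since $V$ carries no differential,'' but the theorem makes no such assumption: an algebra over $\cobar(\cofrob)$ is a chain complex $(V,d)$ together with $\phi$, and a general $H$ may have a nonzero $(1,1,0)$ component. The paper absorbs this by declaring $\varphi_{1,1,0}:=-d$, so that the dg-condition $D\varphi_{r,t,g}=\phi(d\,\mathbf{1}_{r,t,g})$ (with $D\varphi=d\circ_1\varphi+\varphi\circ_1 d$ coming from the differential on $\End(V)$) folds into the single homogeneous relation $\sum\frac{1}{k!}\tau(\varphi\circ_k\varphi)\sigma=0$, now allowing $\varphi_{1,1,0}$ as a factor. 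Your data map, built only from the generators $\mathbf{1}_{i,j,g}$ of $\overline{\cofrob}[-1]$ (which excludes $(1,1,0)$), never produces a $(1,1,0)$ component for $H$ and so is not a bijection onto the elements $H$ in the statement.

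The second is the normalization. With $(\sigma_{(g)})_i^j:=s\,\phi(\mathbf{1}_{i,j,g})\,\iota$ and no scalar, Proposition~\ref{L:compare-circ_k} applied to $\tilde\varphi_{m,n,g_2}\circ_k\tilde\varphi_{i,j,g_1}$ leaves a coefficient $\frac{n!\,j!}{t!\,k!}$ in front of each shuffled term; since $n!\,j!$ depends on how $t+k$ splits as $n+j$, these weights do not collapse to the cobar factor $\frac{1}{k!}$, and the two relations fail to match. The paper's bijection is instead $H=\sum\frac{1}{t!}\tilde\varphi_{r,t,g}\,\hbar^g$: the extra $\frac{1}{n!}\cdot\frac{1}{j!}$ contributed by the two factors in $H\star H$ cancels the decomposition-dependent piece and leaves a uniform $\frac{1}{t!\,k!}$, so the Weyl relation becomes exactly $\frac{1}{t!}$ times the cobar relation. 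Without that rescaling your two systems of quadratic relations are not equivalent, and the data bijection does not upgrade to the claimed correspondence.
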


\begin{proof}
  The properad $\cobar(\cofrob)$ is quasifree, meaning that every
  relation among two or more generators involves $d$.  These relations
  were summarized above.  Therefore the structure of a
  $\cobar(\cofrob)$-algebra on $V$ is equivalent to a collection of
  graded symmetric maps $\varphi_{r,t,g}:T^mV\to T^nV$ (with no
  $\varphi_{1,1,0}$) which satisfy the relations above.  We can define
  $\tilde{\varphi}_{r,t,g}:S^rV\to S^tV$ as $\tilde{\varphi}_{r,t,g}:=
  s\varphi_{r,t,g}\iota$, where $s$ and $\iota$ are the maps from
  section \ref{S:SV-TV-review}.

  Because the $\varphi_{r,t,g}$ are symmetric, they can be recovered
  from $\tilde{\varphi}_{r,t,g}$ as
  $\varphi_{r,t,g}=\iota\tilde{\varphi}_{r,t,g} s$.  This can be seen
  as follows:
$$
\iota\tilde{\varphi}_{r,t,g} s (\bar{v})= \frac{1}{r!}(\iota s)
\varphi_{r,t,g}(\sum_{\sigma\in \sS_r} \sigma \bar{v})
=\frac{1}{r!}\sum_{\sigma\in \sS_r}(\iota s) \varphi_{r,t,g}(\sigma
\bar{v}) =(\iota s) \varphi_{r,t,g}(\bar{v})$$ Since $\sigma$ applied
to $\varphi_{r,t,g}(\bar{v})\in T^tV$ is $\sigma
\varphi_{r,t,g}=\varphi_{r,t,g}$, $(\iota s)$ is the identity on
$\varphi_{r,t,g}(\bar{v})$.

Now let us examine the relations involved in a
$\cobar(\cofrob)$-algebra.  This is a structure consisting of a degree
$-1$ differential $d$ and a collection of degree $-1$ maps
$\varphi_{r,t,g}:T^rV\to T^tV$ along with a symmetry condition, which
can be expressed by saying that they come from the symmetric maps
$\tilde{\varphi}_{r,t,g}$ instead.  These maps are subject to
coherence relations.  All these relations involve only
$D\varphi_{r,t,g}$ and compositions of two $\varphi_{r,t,g}$ indexed
by a two-vertex tree with $k$ edges between the two vertices.

$$  D(\varphi_{r,t,g})=\sum \frac{1}{k!}\tau (\varphi_{m,n, g_2}\circ_k
\varphi_{i,j,g_1})\sigma $$ But $ D\varphi(\bar{v})=
d(\varphi(\bar{v}))+\varphi(d\bar{v})$, where $d$ here is extended as
a derivation $(d\otimes \id\otimes\cdots)\pm (\id\otimes d\otimes
\cdots)\pm\cdots$.  This is $d\circ_1 \varphi+\varphi \circ_1 d$, so
defining $\varphi_{1,1,0}=-d$, the relations are precisely
$$
\sum \frac{1}{k!}\tau
(\varphi_{m,n,g_2}\circ_k\varphi_{i,j,g_1})\sigma=0.
$$

Now, let $(V,\{\varphi_{r,t,g}\})$ be an algebra over
$\cobar(\cofrob)$.  Define $H\in W(V)$ as $\bigoplus
\frac{1}{t!}\tilde{\varphi}_{r,t,g}\hbar^g$.  Then the $\hbar^g$ part
of $\hom(S^r V,S^t V)$ in $H\star H$ is
$$\sum
\frac{1}{n!j!}\tilde{\varphi}_{m,n,g_2}\circ_k\tilde{\varphi}_{i,j,g_1},
$$ where the sum ranges over $m+i-k=r$, $n+j-k=t$, and
$g_1+g_2+k-1=g$.  If this is applied to $[\bar{v}]$, then its
injective image under $\iota$ is equal to
$$\sum \frac{1}{k!t!}\tau(\varphi_{m,n,g_2}\circ_k\varphi_{i,j,g_1})\sigma=0.$$
This shows that a $\cobar(\cofrob)$-algebra defines an element of
square zero in the Weyl algebra.  On the other hand, suppose that we
have such an element $H$ of square zero in the Weyl algebra of a
graded vector space $V$.  Then $(H_1^{1\,(0)})^2=0$, so we can take it
to be a differential $d$ on $V$.  Then, by definining
$\varphi_{r,t,g}=n!H_r^{t\, (g)}$, the reverse equality holds, namely,
$$\sum \frac{1}{k!}\tau (\varphi_{m,n,g_2}\circ_k\varphi_{i,j,g_1})\sigma
=t!\iota (\sum H_{m}^{n\,(g_2)}\circ_k H_{i}^{j\,
  (g_1)})[\bar{v}]=t!\iota H\star H (\bar{v})=0.$$
\end{proof}

\section{The homology of algebras over
  $\cobar(\cofrob)$}\label{S:homology}
The homology of a properad is again a properad and if $V$ is an
algebra over a properad $\mathbf{P}$, then $HV$ is an algebra over the
properad $H\mathbf{P}$.  To see this, recall from Section
\ref{algebras} that an algebra $V$ over a properad $\mathbf{P}$ is a
collection of chain maps satisfying equivariance and compatibility
with composition from $\mathbf{P}(m,n)$ to $Hom(T^mV,T^nV)$.  The
induced maps on homology still satisfy equivariance and compatibility,
so that there is a properad morphism from $H\mathbf{P}(m,n)$ to
$HHom(T^mV,T^nV)$.  There is a natural isomorphism $HHom(T^mV,T^nV)\to
Hom(T^mHV,T^nHV)$, hence a properad morphism $H\mathbf{P}(m,n)\to
Hom(T^m HV, T^n HV)$ affording $HV$ with the structure of an algebra
over $H\mathbf{P}$.

For the properad $\cobar(\cofrob)$, grading by genus one identifies
symmetric generators $\mu\in \cobar(\cofrob)(2,1,0)$ and $\Delta\in
\cobar(\cofrob)(1,2,0)$ which are closed under the differential
because their decomposition is trivial in $\cofrob$.  By general
arguments on the cobar construction, $\mu$ and $\Delta$ can be seen
not to be boundaries, and therefore pass to nonzero classes $[\mu]$
and $[\Delta]$ in homology.  Considering the boundaries of the
generators in the $(3,1,0)$, $(1,3,0)$, $(2,2,0)$, and $(1,1,1)$
spaces of $\cobar(\cofrob)$ we see that in homology, $[\mu]$ satisfies
the Jacobi relation
$$[\mu]\circ_1 [\mu] (1+\sigma+\sigma^2)=0
$$
or, rewritten with $\mu$ as a bracket, more familiarly, this is
$$
[[a,b],c] + (-1)^{(|b|+|c|)|a|}[[b,c],a] +
(-1)^{(|a|+|b|)|c|}[[c,a],b]]
$$
$[\Delta]$ satisfies the coJacobi relation
$$(1+\sigma+\sigma^2)[\Delta]\circ_1 [\Delta]=0
$$
and $[\mu]$ and $[\Delta]$ together satisfy the five term
compatibility relation
$$[\Delta]\circ [\mu]+ (1+\tau)[\mu]\circ_1 [\Delta] (1+\tau)=0
$$
or, applied to $a\otimes b$,
\begin{multline*} [\Delta][a,b] + (-1)^{|a|} ([\mu]\otimes
  \id)a\otimes [\Delta]b +
  (-1)^{|a||b| + |b|} ([\mu]\otimes \id) b\otimes [\Delta]a \\
  +(-1)^{|a||b|} (\id\otimes[\mu])b\otimes [\Delta]a + (\id\otimes
  [\mu])[\Delta]a\otimes b
\end{multline*}
and the involutivity relation
$$[\mu]\circ [\Delta] = 0
$$

This shows that the homology $HV$ of a $\cobar(\cofrob)$-algebra $V$
is a (commutative as opposed to skew-commutative) involutive biLie
algebra, but we have not argued that our computation of the homology
is complete.  We conjecture that the homology of the properad
$\cobar(\cofrob)$ is the involutive bi Lie properad, but at present we
do not have a proof---there remains the possibility that there are
additional nonzero homology operations.

\appendix
\section{Combinatorial factors in detail}\label{A:combinatorics}
In this appendix, we collect some properties of symmetrization $\iota$
and projection $s$, and using this, we relate the partial composition
map for the tensor algebra with the one for the symmetric algebra.

The first two lemmas concern the effect of symmetrization part of a
vector in the tensor algebra.  The first asserts that the outcome of
symmetrizing part of a vector followed by symmetrizing the entire
vector is the same as simply symmetrizing the entire vector.  It is
straightforward to check and we omit the proof.  The second asserts
that $\binom{k}{\ell}(s^\ell\otimes s^{k-\ell})\iota:S^kV\to S^\ell
V\otimes S^{k-\ell}V$ approximates a sum over unshuffles
$\sS_{k,\ell}^{-1}$, and $\binom{k}{\ell}s(\iota^\ell\otimes
\iota^{k-\ell}) :S^\ell V\otimes S^{k-\ell}V\to S^kV$ approximates a
sum over shuffles $\sS_{k,\ell}$.  It is also straightforward to check
but we include the proof since it explains the combinatorial factors.
\begin{lemma}\label{lemma:symtens1}
  If $\ell\le k$, $s^k(\id^{k-\ell}\otimes (\iota s^\ell))=s^k$
\end{lemma}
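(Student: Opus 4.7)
The plan is to evaluate both sides on a pure tensor $v = v_1 \otimes \cdots \otimes v_k \in T^kV$ and show they agree. Since all maps involved are linear, this suffices.

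First, I would unpack the map $\iota s^\ell$ using the definitions from the text: $\iota s^\ell(v_{k-\ell+1} \otimes \cdots \otimes v_k) = \frac{1}{\ell!}\sum_{\sigma \in \sS_\ell} \sigma(v_{k-\ell+1} \otimes \cdots \otimes v_k)$, where $\sigma$ acts with the Koszul sign on the last $\ell$ factors. Applying $\id^{k-\ell} \otimes (\iota s^\ell)$ to $v$ and then $s^k$ therefore yields
\[
s^k(\id^{k-\ell} \otimes (\iota s^\ell))(v) = \frac{1}{\ell!}\sum_{\sigma \in \sS_\ell} \bigl[v_1 \otimes \cdots \otimes v_{k-\ell} \otimes \sigma(v_{k-\ell+1} \otimes \cdots \otimes v_k)\bigr].
\]

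The key observation is that every $\sigma \in \sS_\ell$ extends to a permutation $\tilde{\sigma} \in \sS_k$ by acting as the identity on the first $k-\ell$ positions, and under this extension $\tilde{\sigma} v$ is precisely the element inside the bracket above (with matching Koszul sign, since the identity extension contributes no transpositions of odd entries — the only transpositions counted by $\epsilon(\tilde{\sigma}, \bar{v})$ occur among positions $k-\ell+1, \ldots, k$, exactly as in $\epsilon(\sigma, (v_{k-\ell+1}, \ldots, v_k))$). By the definition of $S^kV$ as the quotient of $T^kV$ by the subspace spanned by $w - \tilde{\sigma} w$, each bracketed term equals $[v] = s^k(v)$.

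Summing the $\ell!$ equal terms and dividing by $\ell!$ yields exactly $s^k(v)$, completing the verification. The only point that requires care is the sign compatibility between the action of $\sigma$ on the last $\ell$ factors and the action of its identity extension $\tilde{\sigma}$ on the full tensor; this is the main (and only real) obstacle, but it is immediate from the definition of the Koszul sign as depending only on transpositions of odd entries.
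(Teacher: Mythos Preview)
Your proof is correct and is exactly the straightforward verification the paper has in mind; the paper actually omits the proof of this lemma, stating only that ``It is straightforward to check and we omit the proof,'' and your argument is the natural direct computation that justifies this remark.
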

\begin{definition}
  Let $\mu^{k,\ell},\nu_{k,\ell}:T^kV\to T^kV$ be given by $v\mapsto
  \sum \sigma v$, where the sum is taken over unshuffles
  $\sS_{k,\ell}^{-1}$ for $\mu$ and over shuffles $\sS_{k,\ell}$ for
  $\nu$.
\end{definition}

The maps $\mu^{k,\ell}$ and $\nu_{k,\ell}$ are defined in the tensor
algebra, but by abuse of notation, we use $\mu^{k,\ell}$ and
$\nu_{k,\ell}$ to refer to the compositions $\binom{k}{\ell}(s\otimes
s)\iota$ and $\binom{k}{l}s(\iota\otimes \iota)$ defined in the
symmetric algebra as well.
\begin{lemma}\label{L:k-l-factor}
  The following diagrams commute:
$$\xymatrix{
  S^kV\ar[d]_{\binom{k}{\ell}(s\otimes
    s)\iota}&T^kV\ar[d]^{\mu^{k,\ell}}\ar[l]_s \\S^\ell V\otimes
  S^{k-\ell}V&T^kV\ar[l]^{\quad\quad s\otimes s} } \xymatrix{ S^\ell
  V\otimes
  S^{k-\ell}V\ar[r]^{\quad\quad\iota\otimes\iota}\ar[d]_{\binom{k}{\ell}s(\iota\otimes\iota)}&T^kV\ar[d]^{\nu_{k,\ell}}
  \\S^kV\ar[r]_\iota&T^kV }$$
\end{lemma}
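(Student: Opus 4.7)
The plan is to verify each of the two diagrams by a direct diagram chase, and both computations reduce to the same elementary identity: combining the symmetrization factor $\tfrac{1}{k!}$ with the cardinality $\ell!(k-\ell)!$ of the block subgroup $\sS_\ell\times\sS_{k-\ell}$ accounts exactly for the binomial factor $\binom{k}{\ell}$, leaving a bare sum over shuffles (resp.\ unshuffles). The key ingredient is the unique factorization recalled in Section \ref{S:SV-TV-review}: every $\sigma\in\sS_k$ can be written uniquely as $\sigma=\rho\tau$ with $\rho\in\sS_\ell\times\sS_{k-\ell}$ and $\tau$ a $(k,\ell)$-unshuffle, and uniquely as $\sigma=\tau'\rho'$ with $\tau'$ a $(k,\ell)$-shuffle.

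For the first diagram, I would take $v\in T^kV$ and compute both paths. The upper-right-lower path yields
$(s\otimes s)\mu^{k,\ell}(v)=\sum_{\tau\in\sS_{k,\ell}^{-1}}(s\otimes s)(\tau v)$.
For the left path, $\binom{k}{\ell}(s\otimes s)\iota s(v)=\binom{k}{\ell}(s\otimes s)\tfrac{1}{k!}\sum_{\sigma\in\sS_k}\sigma v$. Writing $\sigma=\rho\tau$ and noting that $\rho=(\rho_1,\rho_2)\in\sS_\ell\times\sS_{k-\ell}$ acts as $\rho_1$ on the first $\ell$ tensor factors and $\rho_2$ on the last $k-\ell$ (with Koszul signs absorbed into the action), one has $(s\otimes s)(\rho w)=(s\otimes s)(w)$ for every $w\in T^kV$ because $s$ annihilates the $\sS_\ell$- and $\sS_{k-\ell}$-orbits on each factor. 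The $\rho$-sum therefore contributes the constant $\ell!(k-\ell)!$, and $\binom{k}{\ell}\cdot\tfrac{\ell!(k-\ell)!}{k!}=1$ gives the desired equality.

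The second diagram is handled symmetrically. Starting from $[u]\otimes[w]\in S^\ell V\otimes S^{k-\ell}V$, the element $(\iota\otimes\iota)([u]\otimes[w])$ is already invariant under $\sS_\ell\times\sS_{k-\ell}$ acting blockwise, since each tensor factor is symmetric. Applying $\iota$ to $\binom{k}{\ell}s(\iota\otimes\iota)([u]\otimes[w])$ amounts to $\binom{k}{\ell}\iota s$ applied to that invariant element, which expands as $\binom{k}{\ell}\cdot\tfrac{1}{k!}\sum_\sigma \sigma(\iota\otimes\iota)([u]\otimes[w])$. Using the factorization $\sigma=\tau\rho$ with $\tau$ a $(k,\ell)$-shuffle and invoking the same block invariance to collapse the $\rho$-sum into a factor of $\ell!(k-\ell)!$, the binomial factor cancels and one is left with $\sum_{\tau\in\sS_{k,\ell}}\tau(\iota\otimes\iota)([u]\otimes[w])=\nu_{k,\ell}(\iota\otimes\iota)([u]\otimes[w])$, matching the right-down composite.

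There is no real obstacle here; the exercise is purely combinatorial bookkeeping once the unique factorization of $\sS_k$ is in hand. The only subtlety worth flagging is that Koszul signs are handled automatically: the action $\sigma v=\epsilon(\sigma,v)v_{\sigma^{-1}(1)}\otimes\cdots$ already incorporates them, and the composite action $(\rho\tau)v=\rho(\tau v)$ carries the correct sign through the factorization, so no extra sign analysis enters at any stage.
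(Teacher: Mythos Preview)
Your proof is correct and follows essentially the same approach as the paper's own proof: both arguments factor the full symmetrization $\iota s$ via the unique decomposition $\sS_k\cong(\sS_\ell\times\sS_{k-\ell})\cdot\sS_{k,\ell}^{-1}$ (resp.\ $\sS_{k,\ell}\cdot(\sS_\ell\times\sS_{k-\ell})$), observe that the block subgroup is absorbed by $s\otimes s$ or acts trivially on $(\iota\otimes\iota)([u]\otimes[w])$, and cancel the resulting factor $\ell!(k-\ell)!$ against $\binom{k}{\ell}/k!$. Your remark that the Koszul signs are handled automatically by the built-in action is exactly the implicit justification the paper uses as well.
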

\begin{proof}
  Following the first diagram along the top and left gives
  \begin{align*}
    \binom{k}{\ell}(s\otimes s) \iota s(\bar{v}) &=
    \frac{1}{\ell!(k-\ell)!}(s\otimes s) \sum_{\sigma\in \sS_k} \sigma
    \bar{v}
    \\
    &=\frac{1}{\ell!(k-\ell)!}(s\otimes s) \sum_{\tau_1\in
      \sS_\ell}\sum_{\tau_2\in \sS_{k-\ell}}\sum_{\rho\in
      \sS_{k,\ell}^{-1}}(\tau_1\times \tau_2)\rho \bar{v} \\ &
    =\sum_{\rho\in
      \sS^{-1}_{k,\ell}}[\bar{v}_\ell]\otimes[\bar{v}_{k-\ell}] \\
    &=(s\otimes s)(\mu^{k,\ell} \bar{v}).
  \end{align*}
  Similarly, for the second diagram, we get
  \begin{align*}
    \binom{k}{\ell}\iota s(\iota\otimes
    \iota)([\bar{u}]\otimes[\bar{v}])&=
    \frac{1}{\ell!(k-\ell)!}\sum_{\rho\in \sS_{k,\ell}}\sum_{\tau_1\in
      \sS_\ell}\sum_{\tau_2\in
      \sS_{k-\ell}}\rho((\tau_1\iota[\bar{u}])\otimes (\tau_2\iota
    [\bar{v}]))
    \\
    &=\sum_{\rho\in \sS_{k,\ell}}\rho(\iota\otimes
    \iota)([\bar{u}]\otimes [\bar{v}]) \\
    &=\nu_{k,\ell}(\iota\otimes \iota)([\bar{u}]\otimes[\bar{v}]).
  \end{align*}
\end{proof}

Given maps between symmetric products of $V$, one can precompose with
$s$ and postcompose with $\iota$ to obtain maps between tensor
products of $V$.  The following proposition indicates the
combinatorial factor introduced when comparing the result of the
partial gluing prior to passing from symmetric to tensor (the left
hand side) and the partial gluing after passing from symmetric to
tensor (the right hand side).
\begin{proposition}\label{L:compare-circ_k}
  Let $f:S^iV\to S^jV$ and $g:S^mV\to S^nV$.  Then
$$\frac{(j+n-k)!k!}{n!j!}\iota (g\circ_k f)s
= \sum_{\substack{\sigma\in \sS^{-1}_{i+m-k,i}\\\tau\in
    \sS_{j+n-k,j-k}}}\tau((s g\iota)\circ_k (s f\iota))\sigma
$$
\end{proposition}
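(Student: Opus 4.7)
My plan is to unfold the definition of the symmetric partial composition, convert the resulting full symmetrizations $\iota s$ into sums over (un)shuffles using Lemma~\ref{L:k-l-factor}, and observe that the remaining block symmetrizations are absorbed by the tensor partial composition.

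First, I substitute the definition $g\circ_k f = \binom{m+i-k}{i}\binom{j}{k}\,s\bigl[(\iota g s)\circ_k(\iota f s)\bigr]\iota$ into the left-hand side, obtaining $\tfrac{(j+n-k)!\,k!}{n!\,j!}\binom{m+i-k}{i}\binom{j}{k}\,\iota s\,A\,\iota s$ where $A := (\iota g s)\circ_k(\iota f s)$. The coefficient collapses via $\binom{j}{k}\cdot\tfrac{(j+n-k)!\,k!}{n!\,j!}=\binom{j+n-k}{j-k}$, leaving $\binom{m+i-k}{i}\binom{j+n-k}{j-k}\,\iota s\,A\,\iota s$.

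Second, I expand each outer $\iota s$ as a weighted sum over (un)shuffles. Writing any $\pi\in\sS_{m+i-k}$ uniquely as the product of a block permutation (in $\sS_i\times\sS_{m-k}$) and an unshuffle, and summing the block permutations, one obtains
\[
\binom{m+i-k}{i}\,\iota s \;=\; \sum_{\sigma\in\sS^{-1}_{m+i-k,i}}(\iota s\otimes\iota s)\,\sigma
\]
on $T^{m+i-k}V$, and dually $\binom{j+n-k}{j-k}\,\iota s = \sum_\tau \tau\,(\iota s\otimes\iota s)$ on $T^{j+n-k}V$. These are essentially Lemma~\ref{L:k-l-factor} composed on one side with $\iota\otimes\iota$, using the identity $(\iota s\otimes\iota s)\circ\iota s = \iota s$ (a block symmetrization of a fully symmetric tensor is trivial). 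Substituting,
\[
\mathrm{LHS} \;=\; \sum_{\tau,\sigma}\tau\,(\iota s\otimes\iota s)\,A\,(\iota s\otimes\iota s)\,\sigma.
\]

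Third, I argue that $(\iota s\otimes\iota s)\,A\,(\iota s\otimes\iota s) = A$, so each summand reduces to $\tau\,A\,\sigma$. On the input side, each block symmetrization is absorbed by the $s$ appearing at the head of $\iota f s$ or $\iota g s$ via Lemma~\ref{lemma:symtens1}. On the output side, the $n$-block of $A(v)$ lies in $\im(\iota)$ and is already $\sS_n$-symmetric; for the $(j-k)$-block, note that $y:=\iota f s(\cdot)\in T^j V$ is fully $\sS_j$-symmetric, and $A$ touches $y$ only through the map $\iota g s\otimes\id^{\otimes j-k}$, which is linear and fixes the last $j-k$ positions, hence preserves the $\sS_{j-k}$-invariance inherited from $y$. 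Identifying $sg\iota$ and $sf\iota$ with $\iota g s$ and $\iota f s$ as the common tensor-algebra lifts of $g$ and $f$, the sum becomes precisely the right-hand side.

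The main obstacle is the output-side absorption on the $(j-k)$-block, together with careful tracking of the block conventions used in the shuffle/unshuffle sums: the absorption is not a direct application of Lemma~\ref{lemma:symtens1}, but rather relies on the observation that partial composition is linear and transports the full $\sS_j$-symmetry of the intermediate tensor $y$ into $\sS_{j-k}$-invariance of the relevant output block.
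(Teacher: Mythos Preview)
Your argument is correct and follows essentially the same route as the paper's proof. Both proofs unfold the definition of the symmetric $\circ_k$, replace the full symmetrizations $\iota s$ on either side by block symmetrizations times (un)shuffle sums via Lemma~\ref{L:k-l-factor}, absorb the block symmetrizations using Lemma~\ref{lemma:symtens1} (and the fact that the image of $\iota$ is fully symmetric), and match the binomial factors. The paper packages these steps into two commutative diagrams, whereas you carry out the same absorption arguments directly; in particular your treatment of the $(j-k)$-output block---observing that the full $\sS_j$-invariance of $\iota f s(\cdot)$ descends to $\sS_{j-k}$-invariance on the trailing factors after the linear map $\iota g s\otimes\id$---is exactly what the middle triangle of the paper's first diagram encodes.
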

\begin{proof}
  The proof is a commutative diagram.  The composition along the
  righthand side of the diagram below computes
  $$\sum_{\substack{\sigma\in \sS^{-1}_{i+m-k,i}\\\tau\in
      \sS_{j+n-k,j-k}}}\tau((s g\iota)\circ_k (s f\iota))\sigma,$$ the
  righthand side of the equality in the proposition.  It will be shown
  that the composition along the top, lefthand side, and bottom of the
  diagram computes the lefthand side of the equality in the
  proposition.
$$\xymatrix{
  S^{i+m-k}V\ar[d]_{(\id\otimes f)\mu^{i+m-k,m-k}}&&T^{i+m-k}V\ar[d]^{(\id\otimes fs)\mu^{i+m-k,m-k}}\ar[ll]_s\\
  S^{m-k}V\otimes S^{j}V\ar[d]_{id\otimes ((s\otimes s) \iota)}&&T^{m-k}V\otimes S^{j}V\ar[ll]_{s\otimes \id}\ar[d]^{\id\otimes \iota}\\
  S^{m-k}V\otimes S^kV\otimes S^{j-k}V\ar[dd]_{(s (\iota\otimes \iota))\otimes \id}&T^{m-k}V\otimes S^k V\otimes S^{j-k}V\ar[l]_{s\otimes\id\otimes\id}\ar[dr]_{\id\otimes \iota\otimes\iota}&T^{j+m-k}V\ar[l]_{\id\otimes s\otimes s}\ar[d]^{\id}\\
  &S^{m-k}V\otimes T^jV\ar[d]_{\iota \otimes \id}&T^{j+m-k}V\ar[l]^{s\otimes\id}\ar[d]^{s\otimes \id}\\
  S^mV\otimes S^{j-k}V\ar[d]_{g\otimes \id}\ar@/_/[rr]_{\id\otimes\iota}&T^{j+m-k}V\ar[r]^{s\otimes \id}&S^mV\otimes T^{j-k}V\ar[d]^{(\iota g)\otimes\id}\\
  S^nV\otimes S^{j-k}V\ar[d]_{\nu_{j+n-k,n}}\ar[rr]^{\iota\otimes\iota}&&T^{j+n-k}V\ar[d]^{\nu_{n+j-k,n}}\\
  S^{n+j-k}V\ar[rr]_\iota&&T^{n+j-k}V }$$ First, we check
commutativity.  The square and triangle near the middle of the diagram
commute by Lemma \ref{lemma:symtens1}.  The rectangles at the top and
bottom commute by the construction of the shuffle and unshuffle maps.
Everything else commutes trivially.  In order to see that the
composition along the lefthand side computes
$\frac{(j+n-k)!k!}{n!j!}f\circ_k g$, consider the following
commutative diagram:
$$\xymatrix{
  S^{i+m-k}V\ar[d]_{(\id\otimes \iota f s) \iota}&&S^{i+m-k}V\ar[d]^{(\id\otimes \iota f)\mu^{i+m-k,m-k}}\ar[ll]_{\binom{i+m-k}{i}}\\
  T^{j+m-k}V\ar[rr]^{s\otimes \id}\ar[dr]^{\iota s\otimes \iota s\otimes\id}\ar[dd]_{s\otimes\id}&&S^{m-k}V\otimes T^jV\ar[d]^{\iota\otimes(\iota s)\otimes s}\\
  &T^{j+m-k}V\ar[dl]^{s\otimes \id}\ar[r]^{\id\otimes s}&T^mV\otimes S^{j-k}V\ar[d]^{s\otimes \id}\\
  S^mV\otimes T^{j-k}V\ar[rr]^{\id\otimes s}\ar[d]_{s(\iota g\otimes\id)}&&S^mV\otimes S^{j-k}V\ar[d]^{\nu_{j+n-k,n}(g\otimes \id)}\\
  S^{j+n-k}V\ar[rr]_{\binom{j+n-k}{n}}&&S^{j+n-k}V }$$ The left side
of this diagram is $g\circ_k f$ and the right hand side is the left
hand side of the previous diagram.  Here everything commutes trivially
except the triangle, which commutes due to Lemma \ref{lemma:symtens1}.
Since $$
\binom{i+m-k}{i}\binom{j+n-k}{n}=\frac{(j+n-k)!k!}{j!n!}\binom{i+m-k}{i}\frac{j!}{(j-k)!k!}$$
this computes the left hand side of the equation in the proposition,
completing the proof.
\end{proof}

\end{document}